\newtheorem{proposition}{Proposition}
\newtheorem {lemma}{Lemma}
\newtheorem {theorem}{Theorem}
\newtheorem {Corollary}{Corollary}
\numberwithin{equation}{section}
\numberwithin{equation}{section}
\numberwithin{equation}{section}
\begin{document}
\title{Spectral Analysis of diffusions with jump boundary}
\author{Martin Kolb}
\address{Department of Statistics, Oxford University, 1 South Parks Road, Oxford, OX1 3TG, UK}
\email{kolb@stats.ox.ac.uk}
\author{Achim W\"ubker}
\address{Fachbereich Mathematik, Universit\"at Osnabr\"uck, Albrechtstraße 28a, 49076 Osnabr\"uck, Germany}
\email{awuebker@Uni-Osnabrueck.de}
\thanks{A.W. is supported by Deutsche Forschungsgemeinschaft}

\begin{abstract}
In this paper we consider one-dimensional diffusions with constant coefficients in a finite interval with jump boundary and a certain deterministic jump distribution. We use coupling methods in order to identify the spectral gap in the case of a large drift and prove that that there is a threshold drift above which the bottom of the spectrum no longer depends on the drift. As a Corollary to our result we are able to answer two questions concerning elliptic eigenvalue problems with non-local boundary conditions formulated previously by Iddo Ben-Ari and Ross Pinsky. 
\end{abstract}
\maketitle
\section{Introduction and Notation}
This article investigates so called diffusions with jump boundary, which in recent years gave rise to several interesting results (see e.g. \cite{GK1}, \cite{GK2}, \cite{K}, \cite{BaP1}, \cite{BaP2}, \cite{LL} and \cite{LP}). The process itself can be easily described. Consider a diffusion process with initial value $x \in D$ in an open domain $D \subset \mathbb{R}^d$ which we assume to have a $C^{2,\alpha}$-boundary for convenience. When hitting the boundary $\partial D$ of $D$, the diffusion gets redistributed in $D$ according to the jump distribution $\nu$, runs again until it hits the boundary, gets redistributed and repeats this behavior forever. It is heuristically obvious that this process converges in total variation towards its invariant measure, but good or even precise estimates for the rate of convergence are not easy to obtain. One of the main difficulties consists in the non-reversibility of this process. Due to this non-reversibility it is not possible to obtain estimates on the eigenvalues via well-known variational principles. 

If one specializes to the case of a one-dimensional Brownian motion in an interval $(a,b)$ with jump boundary, the exact calculation of the rate of convergence is possible. It turns out that the rate of convergence coincides with the second eigenvalue of the Dirichlet Laplacian $-\frac{1}{2}\frac{d^2}{dx^2}$ in $(a,b)$ independent of the choice of the jump distribution $\nu$. This has been shown in \cite{LLR} using Fourier analytic techniques in combination with a result obtained in \cite{BaP1} and in \cite{KW} using a probabilistic approach. 

Concerning the general diffusion process $(X_t)_{t \geq 0}$ with jump boundary, four open questions are listed in \cite{BaP1}; Question 4 concerning the continuous dependence of the spectral gap on the jump distribution has been answered affirmatively in \cite{KW}. From the remaining three, Question 1 and Question 2 are highly correlated. 

\textit{Firstly}, Ben-Ari and Pinsky ask, whether the eigenvalue of the generator of the diffusion with jump boundary possessing minimal real part is always real. Previously it has been shown in \cite{LLR} that for the one-dimensional Brownian motion with jump boundary the whole spectrum is real, but that this is in general not the case in higher dimensions. 

\textit{Secondly}, Ben-Ari and Pinsky pose the question, whether the spectral gap of a diffusion with jump boundary is always bigger than the bottom of the spectrum of the generator of the diffusion killed at the boundary. It has been shown in \cite{BaP1} that this is true in the case that the answer to the first question is affirmative, i.e. if the eigenvalue of the generator of the diffusion with jump boundary possessing minimal real part is real, then the spectral gap of the diffusion with jump boundary is always bigger than the bottom of the spectrum of the generator of the diffusion killed at the boundary.

In this article we answer these two questions in the negative. For this purpose it suffices to show that the answer to the second question is negative. In order to do so we look at the case of a one-dimensional Brownian motion in $(a,b)$ with constant drift and deterministic jump distribution. For large enough drift we are able to identify the spectral gap of the diffusion with jump boundary and deterministic jump distribution $\delta_{(a+b)/2}$ and thus can compare it with the bottom of the spectrum of the diffusion killed when exiting the interval $(a,b)$. It turns out, that the spectral gap is independent of the drift, once the absolute value of the drift is bigger than a certain threshold.

Though the main motivation behind this work can be formulated in a purely analytic way we prefer to prove our results via probabilistic methods, as the main result as well as the strategy of proof was fully inspired by the underlying probabilistic picture. Moreover our probabilistic approach explains some special features of our main result.
\section{Notation and Results}
In order to formulate our findings in a precise way and to explain the relation to the Questions of Ben-Ari and Pinsky we introduce the process in higher dimensions. Later we specialize to the one-dimensional situation. Let $D \subset \mathbb{R}^d$ be a domain with $C^{2,\alpha}$-boundary ($1>\alpha>0$) and let $(\Omega, (\mathcal{F}_t)_{t \geq 0}, (W_t)_{t \geq 0},(\mathbb{P}_x)_{x \in D})$ denote a smooth uniformly elliptic diffusion in $D$ which is killed after hitting the boundary of $D$, i.e. $(W_t)_{t \geq 0}$ is a diffusion process associated to a generator $L$ of the form 
\begin{displaymath}
L:= \frac{1}{2}\sum_{i,j=1}^da_{ij}(x)\partial_{ij} + \sum_{i=1}^d b_i(x)\partial_i,
\end{displaymath}
where the matrix $a=(a_{ij})_{i,j=1}^d$ is uniformly elliptic with symmetric coefficients $a_{ij}$, i.e. $a_{ij}=a_{ji}$. Moreover, we assume that
$a_{ij}$ and $b_i$ are bounded and have bounded derivatives. 

This process induces a compact semigroup of bounded operators $(P^D_t)_{t \geq 0}$ in $L^{2}(D)$, which is generated by $L$, with Dirichlet boundary conditions as operator acting in $L^2(D)$. This semigroup corresponds to a $L$-diffusion process in $D$, which is killed when hitting the boundary $\partial D$. The spectrum $\Sigma(-L)$ of the operator $-L$ consists of a sequence $(\lambda_k^D)_{k=0}^{\infty}$ converging to infinity. We denote by $g^D(\cdot, \cdot)$, $p^D(t,\cdot,\cdot)$ the Green- and the transition-function associated to the $L$-diffusion in $D$ killed at the boundary $\partial D$. 

Let $W^{\rho,0}$ be a $L$-diffusion in $D$  with initial distribution $\rho$ which is killed at $\partial D$. Moreover, let $(W^{\nu,i})_{i \in \mathbb{N}}$ denote an independent family of killed $L$-diffusion in $D$ with initial distribution $\nu$, which is independent of $W^{\rho,0}$. Set $T^{\rho,\nu}_0=\inf \bigl\lbrace t \geq 0 \mid W^{\rho,0}_t \in \partial D \bigr\rbrace$, $S^{\nu}_i = \inf \bigl \lbrace t \geq 0 \mid W^{\nu,i}_t \in \partial D \bigr\rbrace $ and inductively $T^{\rho,\nu}_{i+1}:= T^{\rho,\nu}_i+S^{\nu}_{i+1}$.
The process $(X^{\rho,\nu}_t)$, called a diffusion with jump boundary starting from the initial distribution $\rho$, is now defined as
\begin{equation}\label{definitionBMJB}
 X^{\rho,\nu}_t := \mathbf{1}_{\lbrace 0 \leq t < T^{\rho,\nu}_0 \rbrace}W^{\rho,0}_t+\sum_{i=1}^{\infty}\mathbf{1}_{\lbrace T_{i-1}^{\rho,\nu}\leq t < T_{i}^{\rho,\nu}\rbrace}W^{\nu,i}_{t-T^{\rho,\nu}_{i-1}}.
\end{equation}
Then it is shown in \cite{BaP1}, \cite{GK3} and \cite{KW} via different methods that there exists an invariant distribution $\pi^{L,\nu}$ and that 
\begin{equation}
\gamma_1(L,\nu) = -\lim_{t\rightarrow \infty}\frac{1}{t}\log \bigl\| \mathbb{P}_x\bigl(X_t \in \cdot\bigr)-\pi^{L,\nu}\bigr\|_{TV} > 0.
\end{equation}

The generator (in a certain sense made precise in Theorem 1 in \cite{BaP2}) of $(X_t)_{t \geq 0}$ is given by $(\mathcal{L},\mathcal{D}(\mathcal{L}))$, where 
\begin{equation}
\mathcal{D}(\mathcal{L})=\lbrace f \in C^2(\overline{D}) \mid \int_D f \,d\nu = f \restriction \partial D \rbrace, \,\text{   }\mathcal{L}f=Lf \text{  for $f\in   \mathcal{D}(\mathcal{L})$}.
\end{equation}
It is shown in \cite{BaP1} that the spectrum of $(-\mathcal{L},\mathcal{D}(\mathcal{L}))$ consists of an infinite sequence of eigenvalues which have -- with the only exception of $0$ -- strictly positive real part and that 
\begin{equation}\label{e:spectralgap}
\gamma_1(L,\nu) = \min \bigl\lbrace \Re \tilde{\lambda}\mid 0 \ne -\tilde{\lambda} \text{  is an eigenvalue for $(\mathcal{L},\mathcal{D}(\mathcal{L}))$}\bigr\rbrace
\end{equation}
As mentioned in the introduction it is known that in the case of a one-dimensional Brownian motion in $D=(a,b)$ and arbitrary jump distribution $\nu$ the spectral gap $\gamma_1(L,\nu)$ always coincides with $\lambda_1^D$. Furthermore, the full spectrum (i.e. all eigenvalues) of the generator of the one-dimensional Brownian motion with jump boundary in $D=(a,b)$ is real. On the other hand it has been shown in \cite{LLR} that in higher dimensions not necessarily every eigenvalue is real. In their examples the non-real eigenvalues do not have minimal real part.

The work \cite{BaP1} contains four questions. Question 4 concerning the continuous dependence of the spectral gap on the jump distribution is answered in \cite{KW}. Question 1 asks, whether the eigenvalue $\overline{\lambda}$ with minimal real part as in \eqref{e:spectralgap} is always real; Question 2 asks, whether $\gamma_1(L,\nu)$ is always bigger than $\lambda_0^D$. It is shown in \cite{BaP1} that an positive answer to Question 1 necessarily implies an affirmative answer to Question 2. 

Our main theorem is the following
\begin{theorem}\label{t:main}
Let $L^{\sigma,\mu}= \frac{\sigma^2}{2}\frac{d^2}{dx^2}+\mu\frac{d}{dx}$ denote the generator of a Brownian motion with variance $\sigma$ and drift $\mu$ and let $(X)_{t \geq 0}$ denote the corresponding $L^{\sigma,\mu}$-diffusion process in the interval $(a,b)$ with jump boundary and the deterministic jump distribution $\delta_{x_0}$ with $x_0=\frac{a+b}{2}$. Let $\gamma_1(L^{\sigma,\mu},\delta_{x_0})$ denote the spectral gap of $(X_t)_{t\geq 0}$. Then there exists a constant $\mu(\sigma,x_0)$ such that for all $\mu > \mu(\sigma,x_0)$
\begin{equation}\label{e:spectralgapformula}
\gamma_1(L^{\sigma,\mu},\delta_{x_0}) = \frac{8\sigma^2\pi^2}{(b-a)^2}.
\end{equation}
\end{theorem}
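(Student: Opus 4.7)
The plan is to compute all eigenvalues of $(\mathcal{L},\mathcal{D}(\mathcal{L}))$ explicitly and appeal to \eqref{e:spectralgap} to identify the spectral gap as the minimum real part among the non-trivial ones. An eigenfunction $f$ with eigenvalue $-\lambda$ solves $\frac{\sigma^2}{2}f''+\mu f'+\lambda f = 0$ on $(a,b)$ subject to the three-point condition $f(a) = f(x_0) = f(b)$. Translating to the centered interval $(-L/2,L/2)$ with $L=b-a$ and $x_0=0$, the gauge substitution $f(x) = e^{-\mu x/\sigma^2}g(x)$ eliminates the drift term and reduces the equation to $g''+\gamma^2 g = 0$, where $\gamma^2 := 2\lambda/\sigma^2 - \mu^2/\sigma^4$ is complex in general. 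Writing $g(x) = A\cos(\gamma x)+B\sin(\gamma x)$, the boundary conditions split naturally into two cases according to whether $A = f(0)$ vanishes.

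In Case I ($A=0$, antisymmetric type), the two endpoint conditions $f(\pm L/2)=0$ force $\sin(\gamma L/2)=0$, hence $\gamma_k = 2k\pi/L$, yielding the purely real family
\begin{equation*}
\lambda_k^{(1)} = \frac{2\sigma^2 k^2\pi^2}{L^2} + \frac{\mu^2}{2\sigma^2}, \qquad k \geq 1.
\end{equation*}
In Case II (normalise $A=1$), adding and subtracting the two endpoint equations produces the transcendental equation $\cos(\gamma L/2)=\cosh(\mu L/(2\sigma^2))$. Since $\cosh(\mu L/(2\sigma^2))>1$ for $\mu\neq 0$, all solutions must be strictly complex, and solving $\cos z = c$ for $c>1$ gives $\gamma_m = 4m\pi/L \pm i\mu/\sigma^2$ ($m \in \mathbb{Z}$); inserting into $\lambda = \frac{\sigma^2}{2}\gamma^2+\frac{\mu^2}{2\sigma^2}$ produces the complex family
\begin{equation*}
\lambda_m^{(2)} = \frac{8\sigma^2 m^2\pi^2}{L^2} \pm \frac{4im\pi\mu}{L}, \qquad m \geq 1,
\end{equation*}
together with the trivial $\lambda = 0$ at $m=0$; taken together the two families should exhaust the spectrum.

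Minimizing the real parts now gives the theorem: $\lambda_1^{(1)} = 2\sigma^2\pi^2/L^2 + \mu^2/(2\sigma^2)$ grows like $\mu^2$ while $\mathrm{Re}(\lambda_1^{(2)}) = 8\sigma^2\pi^2/L^2$ is drift-independent, and equating the two values pins down the threshold $\mu(\sigma,x_0) = 2\sqrt{3}\sigma^2\pi/(b-a)$. For $\mu$ above this, Case II wins and \eqref{e:spectralgapformula} follows. The main technical obstacle I anticipate is verifying completeness of the eigenvalue list---that $\cos(\gamma L/2) = \cosh(\mu L/(2\sigma^2))$ admits no complex solutions beyond the ones enumerated, that degenerate regimes ($\gamma^2=0$, linearly dependent boundary equations) contribute nothing new, and that the non-self-adjoint operator $\mathcal{L}$ has purely discrete spectrum in this setting. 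Since the abstract emphasizes coupling, I suspect the authors bypass a full enumeration by directly proving the lower bound $\gamma_1\geq 8\sigma^2\pi^2/(b-a)^2$ via a synchronous coupling of two copies of the process that meet at joint returns to $x_0$, then matching it with an upper bound supplied by the explicit eigenfunctions $e^{\pm 4i\pi x/(b-a)}$ (in the centered coordinate) realising $\lambda_{\pm 1}^{(2)}$.
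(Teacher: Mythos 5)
Your proposal is correct, but it proceeds by a genuinely different route: the paper never computes a single eigenvalue and works probabilistically throughout. The authors' lower bound on $\gamma_1$ comes from a three-stage coupling (mirror coupling until one copy exits, a second mirror phase until the copies are either equal or exactly $(b-a)/2$ apart, then a parallel phase), whose coupling time has exponential moments for every rate below $\min\bigl(\frac{8\sigma^2\pi^2}{(b-a)^2},\frac{2\sigma^2\pi^2}{(b-a)^2}+\frac{\mu^2}{2\sigma^2}\bigr)$; their upper bound comes from lower-bounding $\bar d(t_n)$ at times $t_n=n(b-x_0)/\mu$ by conditioning on the driving Brownian motion staying in a window of width $(b-x_0)/(2\sigma)$, which forces two copies started at $x_0+(b-x_0)/4$ and $x_0+3(b-x_0)/4$ to cycle deterministically through the interval and land in disjoint sets at time $t_n$; the probability $\mathbb{P}(\tau_J>t_n)\asymp e^{-8\sigma^2\pi^2t_n/(b-a)^2}$ is where the constant comes from. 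Your direct spectral computation is sharper for this particular model: granting the two facts the paper itself imports from Ben-Ari--Pinsky (discreteness of the spectrum and the identity \eqref{e:spectralgap}), your enumeration is complete --- the split $f(x_0)=0$ versus $f(x_0)\neq 0$ is exhaustive, the sum/difference manipulation of the two endpoint equations loses nothing, $\cos z=\cosh w$ has exactly the solutions $z=2m\pi\pm iw$, the coefficient $B$ is always solvable because $\sin(\gamma L/2)=\pm i\sinh(\mu L/(2\sigma^2))\neq0$ on the solution set, and the degenerate case $\gamma^2=0$ forces $f\equiv0$. You therefore obtain not only Theorem \ref{t:main} but the exact threshold $\mu_0(\sigma,x_0)=2\sqrt3\,\sigma^2\pi/(b-a)$, i.e.\ a proof of the paper's conjecture \eqref{e:threshold}, and Corollary \ref{c:corolB} falls out at once since the minimizing eigenvalues $\frac{8\sigma^2\pi^2}{(b-a)^2}\pm\frac{4i\pi\mu}{b-a}$ are non-real. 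What the explicit computation does not deliver is the paper's first Corollary (existence of an efficient coupling), nor any robustness: it hinges on constant coefficients, the midpoint jump location and exact solvability of the ODE, whereas the coupling argument explains probabilistically why the answer is the exit rate of a driftless Brownian motion from an interval of half the length and is the part of the paper with some hope of generalization.
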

The assertion of Theorem \ref{t:main} shows the following somewhat surprising feature, which is connected to the fact, that $x_0$ has been defined as the center of the interval $(a,b)$. If the drift is larger than a certain threshold, then the spectral gap of the diffusion with jump distribution $\delta_{x_0}$ is independent of the drift. We remark that an explicit (not optimal) expression for $\mu(\sigma,x_0)$ can be extracted from the proof. 

Let us define
\begin{displaymath}
\mu_0(\sigma,x_0)= \inf\lbrace \mu \mid \text{ formula \eqref{e:spectralgapformula} is true}\rbrace.
\end{displaymath}
The proof of Theorem \ref{t:main} lead us to \textit{conjecture} that 
\begin{equation}\label{e:threshold}
\mu_0(\sigma,x_0) = \sqrt{3}\frac{2\sigma^2\pi}{b-a}.
\end{equation}
holds true. Thus a complementing investigation studying the case of a small drift might uncover further interesting features of this process.

The proof of Theorem \ref{t:main} actually demonstrates somewhat more, namely:
\begin{Corollary}
Let $x_0=\frac{a+b}{2}$, $L^{\sigma,\mu}=\frac{\sigma^2}{2}\frac{d^2}{dx^2} + \mu\frac{d}{dx}$ and let $(X_t)_{t \geq 0}$ be a $L^{\sigma,\mu}$-diffusion in the interval $(a,b)$ with jump boundary and jump distribution $\delta_{x_0}$. If $\mu \geq \mu(\sigma,x_0)$ then there exists an efficient coupling.
\end{Corollary}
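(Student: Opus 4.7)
\emph{Proof plan.} The plan is to identify the coupling of the Corollary with the one produced along the proof of Theorem~\ref{t:main} and verify it attains the optimal exponential rate. Recall that a coupling $(X_t,Y_t)$ of two copies of the process with coupling time $T:=\inf\{t\ge 0:X_s=Y_s\text{ for all }s\ge t\}$ is called \emph{efficient} if
\[
-\limsup_{t\to\infty}\frac{1}{t}\log\mathbb{P}(T>t)\;=\;\gamma_1(L^{\sigma,\mu},\delta_{x_0}).
\]
Applied to a coupling in which one marginal is started at $x$ and the other from $\pi^{L,\delta_{x_0}}$, the coupling inequality $\|\mathbb{P}_x(X_t\in\cdot)-\pi^{L,\delta_{x_0}}\|_{TV}\le\mathbb{P}(T>t)$ and the definition of $\gamma_1$ as the exponential total variation rate immediately show that the left-hand side above is automatically bounded by $\gamma_1$ for every coupling. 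Hence my task reduces to exhibiting one coupling whose coupling time decays at exponential rate at least $\gamma_1$.

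For this I will appeal directly to the coupling built inside the proof of Theorem~\ref{t:main}. That proof establishes, for $\mu\ge\mu(\sigma,x_0)$, an explicit Markovian coupling of two copies of $(X_t)_{t\ge 0}$ starting from arbitrary initial distributions $\rho_1,\rho_2$, together with an estimate of the form
\[
\mathbb{P}(T>t)\;\le\;C(\rho_1,\rho_2)\,\exp\!\Bigl(-\tfrac{8\sigma^2\pi^2}{(b-a)^2}\,t\Bigr)
\]
valid for all sufficiently large $t$. This is precisely the ingredient giving the lower bound $\gamma_1\ge 8\sigma^2\pi^2/(b-a)^2$ in Theorem~\ref{t:main}, and, combined with the identification $\gamma_1=8\sigma^2\pi^2/(b-a)^2$ from the same theorem, it says that this coupling already achieves coupling-time decay rate $\ge\gamma_1$. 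Together with the automatic upper bound recalled above we obtain equality, so the coupling is efficient and the Corollary follows.

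The entire substance therefore lies in the construction of Theorem~\ref{t:main}. The intuition I would rely on is that, above the threshold $\mu(\sigma,x_0)$, a diffusion started anywhere in $(a,b)$ exits through $b$ with overwhelming probability, so after a single boundary excursion both copies have been placed at $x_0$ and one couples them on their subsequent $x_0\to b$ excursions. The exponential rate at which two copies launched from $x_0$ fail to have met after many such excursions should match the second Dirichlet eigenvalue of $-\tfrac{\sigma^2}{2}\tfrac{d^2}{dx^2}$ on the half-interval $(x_0,b)$, which is exactly $8\sigma^2\pi^2/(b-a)^2$. The hard part will be quantifying the small-probability excursions that reach the \emph{left} endpoint $a$ and controlling the fluctuations of excursion durations so that both contribute only subleading corrections to this exponential rate; the role of the threshold $\mu(\sigma,x_0)$ is precisely to make these contributions harmless, and no further work is needed beyond Theorem~\ref{t:main} to record the Corollary.
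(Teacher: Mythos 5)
Your argument is correct and is exactly the paper's route: the Corollary is recorded as an immediate byproduct of the coupling constructed in the lower-bound part of the proof of Theorem \ref{t:main}, whose tail estimate \eqref{e:tailofcoupl} gives decay rate $\tfrac{8\sigma^2\pi^2}{(b-a)^2}=\gamma_1(L^{\sigma,\mu},\delta_{x_0})$ for $\mu$ above the threshold, while the coupling inequality gives the matching automatic upper bound. Nothing further is needed.
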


Theorem \ref{t:main} leads immediately to the following Corollaries, answering two questions from \cite{BaP2} which we already discussed above. 
\begin{Corollary}\label{c:corolA}
Let $x_0 = (a+b)/2$ and let $L^{\sigma,\mu}= \frac{\sigma^2}{2}\frac{d^2}{dx^2}+\mu\frac{d}{dx}$ denote the generator of a Brownian motion with variance $\sigma$ and drift $\mu$ and let $(X)_{t \geq 0}$ denote the corresponding $L^{\sigma,\mu}$-diffusion process in the interval $(a,b)$ with jump boundary and the deterministic jump distribution $\delta_{x_0}$. Then for $\mu$ large enough one has
\begin{displaymath}
 \gamma_1(L^{\sigma,\mu},\delta_{x_0})< \lambda_0,
\end{displaymath}
where $\lambda_0=\frac{\sigma^2\pi^2}{2(b-a)^2}+\frac{\mu^2}{2\sigma^2}$ is the lowest eigenvalue of the selfadjoint operator $-L^{\sigma,\mu}$ in $(a,b)$ with Dirichlet boundary conditions.
\end{Corollary}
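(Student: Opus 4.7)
The plan is to deduce the corollary as an immediate quantitative comparison, using the drift-independence of the spectral gap supplied by Theorem \ref{t:main}. The right-hand side of \eqref{e:spectralgapformula} is a constant that does not depend on $\mu$, whereas $\lambda_0 = \frac{\sigma^2\pi^2}{2(b-a)^2} + \frac{\mu^2}{2\sigma^2}$ tends to infinity with $\mu$. So one just chooses $\mu$ large enough that $\lambda_0$ exceeds the drift-independent value of the spectral gap.

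First I would briefly justify the stated formula for $\lambda_0$. The Dirichlet realization of $-L^{\sigma,\mu}$ on $L^2(a,b)$ is not symmetric because of the first-order term, but the standard gauge transformation $u(x) = e^{-\mu x/\sigma^2} v(x)$ conjugates it into the selfadjoint Schr\"odinger operator $-\frac{\sigma^2}{2}\frac{d^2}{dx^2} + \frac{\mu^2}{2\sigma^2}$ with Dirichlet boundary conditions on $(a,b)$. Its bottom eigenvalue is $\frac{\sigma^2\pi^2}{2(b-a)^2} + \frac{\mu^2}{2\sigma^2}$, which coincides with the principal eigenvalue $\lambda_0^D$ of the killed $L^{\sigma,\mu}$-diffusion, so the corollary indeed answers Question 2 of Ben-Ari and Pinsky in the negative.

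Then I would invoke Theorem \ref{t:main}: for every $\mu > \mu(\sigma, x_0)$ the spectral gap equals $8\sigma^2\pi^2/(b-a)^2$. The desired inequality $\gamma_1(L^{\sigma,\mu},\delta_{x_0}) < \lambda_0$ then reduces to
$$\frac{\mu^2}{2\sigma^2} > \frac{8\sigma^2\pi^2}{(b-a)^2} - \frac{\sigma^2\pi^2}{2(b-a)^2} = \frac{15\sigma^2\pi^2}{2(b-a)^2},$$
that is, $\mu > \sqrt{15}\,\sigma^2\pi/(b-a)$. It thus suffices to take $\mu$ larger than the maximum of $\mu(\sigma, x_0)$ and $\sqrt{15}\,\sigma^2\pi/(b-a)$, and the corollary follows. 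There is essentially no obstacle here: once Theorem \ref{t:main} is in hand the statement is a one-line arithmetic comparison, exploiting the fact that the spectral gap is flat in $\mu$ on the relevant range while $\lambda_0$ grows quadratically.
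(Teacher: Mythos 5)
Your argument is correct and is exactly how the paper obtains the corollary: the identification of $\lambda_0$ via the gauge transformation is the paper's Lemma \ref{l:exittime}, and the corollary itself is stated as an immediate consequence of Theorem \ref{t:main}, since the spectral gap is constant in $\mu$ above the threshold while $\lambda_0$ grows like $\mu^2/(2\sigma^2)$. Your explicit threshold $\mu > \max\bigl(\mu(\sigma,x_0), \sqrt{15}\,\sigma^2\pi/(b-a)\bigr)$ is a correct quantitative version of the same one-line comparison.
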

and
\begin{Corollary}\label{c:corolB}
Let $x_0 = (a+b)/2$ and let  $L^{\sigma,\mu}= \frac{\sigma^2}{2}\frac{d^2}{dx^2}+\mu\frac{d}{dx}$ denote the generator of a Brownian motion with variance $\sigma$ and drift $\mu$. Let $(X)_{t \geq 0}$ denote the corresponding $L^{\sigma,\mu}$-diffusion process in the interval $(a,b)$ with jump boundary and the deterministic jump distribution $\delta_{x_0}$ and let $\gamma_1(L^{\sigma,\mu},\delta_{x_0})$ denote the spectral gap of $(X_t)_{t \geq 0}$. Then there exists $\mu_0$ such that for $\mu \geq \mu_0$ 
\begin{displaymath}
\bigl \lbrace \tilde{\lambda} \mid  \Re \tilde{\lambda} = \gamma_1(L^{\sigma,\mu},\delta_{x_0}), \text{   $\tilde{\lambda}$ is an eigenvalue for $(-\mathcal{L},\mathcal{D}(\mathcal{L}))$}\bigr\rbrace \subset \mathbb{C}\setminus \mathbb{R}.
\end{displaymath}
\end{Corollary}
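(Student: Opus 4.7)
The plan is to combine Theorem \ref{t:main} with a direct computation showing that the specific value $\gamma_{1}(L^{\sigma,\mu},\delta_{x_{0}}) = 8\sigma^{2}\pi^{2}/(b-a)^{2}$ cannot occur as a \emph{real} eigenvalue of $(-\mathcal{L},\mathcal{D}(\mathcal{L}))$ once $\mu$ is sufficiently large. Since the spectrum of $-\mathcal{L}$ is discrete (by \cite{BaP1}) and the minimum real part is attained, any $\tilde{\lambda}$ realizing this minimum that happens to be real would have to equal exactly $\gamma_{1}$. So it is enough to rule out $\tilde{\lambda} = 8\sigma^{2}\pi^{2}/(b-a)^{2}$ as a real eigenvalue for $\mu$ large.

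First I would set up the eigenvalue ODE $\tfrac{\sigma^{2}}{2}f'' + \mu f' + \tilde{\lambda}f = 0$ with the nonlocal boundary condition $f(a) = f(b) = f(x_{0})$ coming from the description of $\mathcal{D}(\mathcal{L})$. The characteristic roots are $r_{\pm} = \sigma^{-2}\bigl(-\mu \pm \sqrt{\mu^{2} - 2\sigma^{2}\tilde{\lambda}}\bigr)$. Taking $\tilde{\lambda} = 8\sigma^{2}\pi^{2}/(b-a)^{2}$ and requiring $\mu > 4\sigma^{2}\pi/(b-a)$ makes the discriminant strictly positive, so $r_{+} \ne r_{-}$ are both \emph{real and negative}, and the general solution is $f(x) = Ae^{r_{+}x} + Be^{r_{-}x}$.

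Next I would impose the three-point condition. Writing $l = b-a$ and shifting so that $a=0$, $b=l$, $x_{0}=l/2$, the equations $f(0)=f(l)$ and $f(0)=f(l/2)$ become
\begin{equation*}
A(1-e^{r_{+}l}) = B(e^{r_{-}l}-1), \qquad A(1-e^{r_{+}l/2}) = B(e^{r_{-}l/2}-1).
\end{equation*}
Because $r_{\pm} \neq 0$, none of the quantities $1-e^{r_{\pm}l/2}$ vanish, forcing both $A, B \neq 0$. Using the factorization $1-e^{rl} = (1-e^{rl/2})(1+e^{rl/2})$ on both equations and taking the ratio collapses the system to the single identity $1+e^{r_{+}l/2} = 1+e^{r_{-}l/2}$, i.e.\ $r_{+} = r_{-}$, contradicting $r_{+}\ne r_{-}$. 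Thus $\gamma_{1}$ is not a real eigenvalue.

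Setting $\mu_{0} := \max\{\mu(\sigma,x_{0}),\,4\sigma^{2}\pi/(b-a)\}$, the combination of Theorem \ref{t:main} and the above argument finishes the proof. The only genuinely delicate point is the algebra of the third paragraph, and it is clean precisely \emph{because} $x_{0}$ is the midpoint: the factorization $e^{rl}-1 = (e^{rl/2}-1)(e^{rl/2}+1)$ is exactly what makes the two boundary conditions force $r_{+}=r_{-}$. For an off-center $x_{0}$ one would not obtain such a transparent contradiction, which is consistent with the special role that the midpoint already plays in Theorem \ref{t:main}.
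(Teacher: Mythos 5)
Your proof is correct, but it follows a genuinely different route from the paper. The paper obtains Corollary \ref{c:corolB} indirectly: it invokes the result of \cite{BaP1} that \emph{if} the eigenvalue of minimal real part were real, then $\gamma_1(L^{\sigma,\mu},\delta_{x_0})$ would exceed the principal Dirichlet eigenvalue $\lambda_0$; since Theorem \ref{t:main} together with Lemma \ref{l:exittime} yields $\gamma_1(L^{\sigma,\mu},\delta_{x_0}) = 8\sigma^2\pi^2/(b-a)^2 < \sigma^2\pi^2/(2(b-a)^2) + \mu^2/(2\sigma^2)$ for $\mu$ large (Corollary \ref{c:corolA}), the minimal-real-part eigenvalue cannot be real. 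You instead verify non-reality directly by solving the nonlocal eigenvalue problem $\tfrac{\sigma^2}{2}f''+\mu f'+\tilde\lambda f=0$, $f(a)=f(b)=f(x_0)$, at the value $\tilde\lambda=8\sigma^2\pi^2/(b-a)^2$; your reduction (a real eigenvalue with real part $\gamma_1$ must equal $\gamma_1$) is the right way to use Theorem \ref{t:main}, and the algebra with the factorization $1-e^{rl}=(1-e^{rl/2})(1+e^{rl/2})$ is sound: it forces $r_+=r_-$, contradicting the positivity of the discriminant for $\mu>4\sigma^2\pi/(b-a)$. Your approach is more self-contained (it does not use the Ben-Ari--Pinsky implication between their Questions 1 and 2 as a black box) and it makes transparent why the midpoint is special; the paper's route is shorter and avoids having to justify that the eigenvalues of the semigroup generator are exactly the $\tilde\lambda$ for which the classical nonlocal boundary-value problem has a nontrivial solution --- a fact you should cite (Theorem 1 of \cite{BaP2}) rather than assume. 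Two small points to tidy: take $\mu_0$ strictly larger than $4\sigma^2\pi/(b-a)$, or note that the double-root case $r_+=r_-$ also admits no nontrivial solution (with $f=(A+Bx)e^{rx}$ the two conditions force $(e^{rl/2}-1)^2=0$); and observe that the set in the statement is nonempty because the minimum in \eqref{e:spectralgap} is attained, so your conclusion is not vacuous.
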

Corollary \ref{c:corolA} and Corollary \ref{c:corolB} demonstrate that the answer to Question 1 and Question 2 of \cite{BaP2} (observe the different sign convention in \cite{BaP2}) is negative.
\section{Proof of Theorem \ref{t:main}}
In this section we present the proof of our main result Theorem \ref{t:main}. In this section we assume without loss of generality that $\mu > 0$. 

In order to get a first insight into the nature of the process $(X_t)_{t \geq 0}$ for large drift $\mu$ we look at its invariant distribution $\pi_{\nu,\mu,\sigma}$.
\begin{proposition}\label{p:invmeas}
Let $L^{\sigma,\mu}:= \frac{\sigma^2}{2}\frac{d^2}{dx^2}+\mu\frac{d}{dx}$ denote the generator of a Brownian motion with variance $\sigma$ and drift $\mu$ and let $(X)_{t \geq 0}$ denote the corresponding $L^{\sigma,\mu}$-diffusion process in the interval $(a,b)$ with jump boundary and the jump distribution $\nu$. Let $\gamma_1(L^{\sigma,\mu},\delta_{x_0})$ denote the spectral gap of $(X_t)_{t\geq 0}$. Then the invariant distribution $\pi_{\nu,\mu,\sigma}$ of $(X_t)_{t\geq 0}$ satisfies
\begin{displaymath}
\lim_{\mu \rightarrow \infty}\pi_{\nu,\mu,\sigma} = \frac{ \nu((a,y])\,dy}{\int_a^b\nu((a,z])\,dz}
\end{displaymath}
in the weak sense. In particular, for $\nu = \delta_{x_0}$ we have
\begin{displaymath}
\lim_{\mu \rightarrow \infty}\pi_{\delta_{x_0},\mu,\sigma} = (b-x_0)^{-1}|\cdot\cap[x_0,b)|.
\end{displaymath}
\end{proposition}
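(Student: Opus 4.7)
The plan is to exploit the renewal representation of the invariant measure: since $(X_t)$ is a regenerative process with regeneration epochs given by the jump times, one has
\begin{displaymath}
\int f\,d\pi_{\nu,\mu,\sigma} \;=\; \frac{\mathbb{E}_\nu\bigl[\int_0^{\tau}f(W_s)\,ds\bigr]}{\mathbb{E}_\nu[\tau]} \;=\; \frac{\int_a^b f(y)\,G_\nu(y)\,dy}{\int_a^b G_\nu(y)\,dy},\qquad G_\nu(y):=\int_a^b g^D_\mu(x,y)\,\nu(dx),
\end{displaymath}
where $\tau$ is the exit time from $(a,b)$ of the killed $L^{\sigma,\mu}$-diffusion $W$ and $g^D_\mu$ is its Dirichlet Green's function. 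Weak convergence of $\pi_{\nu,\mu,\sigma}$ therefore reduces to an asymptotic analysis of $\mu G_\nu(y)$.

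The core calculation is to write $g^D_\mu$ via scale and speed. Taking $s'(x)=e^{-\alpha x}$ and $m'(x)=(2/\sigma^2)e^{\alpha x}$ with $\alpha:=2\mu/\sigma^2$, the standard formula $g^D_\mu(x,y)=(s(b)-s(a))^{-1}(s(x\wedge y)-s(a))(s(b)-s(x\vee y))\,m'(y)$ collapses to
\begin{displaymath}
\mu\, g^D_\mu(x,y) \;=\; \frac{(1-e^{-\alpha(x-a)})(1-e^{-\alpha(b-y)})}{1-e^{-\alpha(b-a)}} \qquad (x\le y),
\end{displaymath}
and to an analogous expression carrying an extra factor $e^{-\alpha(x-y)}$ when $x>y$. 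Hence $\mu\,g^D_\mu(x,y)\to\mathbf{1}_{x<y}$ for every $(x,y)\in(a,b)^2$ with $x\ne y$, while the same formulas show $\mu\,g^D_\mu$ is uniformly bounded (by $2$, say) once $\mu$ is large enough that $e^{-\alpha(b-a)}\le 1/2$.

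It then remains to pass to the limit twice. First, dominated convergence in $\nu(dx)$ yields $\mu G_\nu(y)\to\nu((a,y))$ for Lebesgue-a.e.\ $y\in(a,b)$, which coincides with $\nu((a,y])$ outside a Lebesgue-null set. Second, for any $f\in C_b((a,b))$, dominated convergence in $y$ (with constant dominating function $2\|f\|_\infty$) gives $\int f(y)\,\mu G_\nu(y)\,dy\to\int_a^b f(y)\,\nu((a,y])\,dy$ and, by taking $f\equiv 1$, $\mu\,\mathbb{E}_\nu[\tau]\to\int_a^b\nu((a,z])\,dz$. Dividing numerator by denominator produces the claimed weak limit, and for $\nu=\delta_{x_0}$ the density simplifies to the normalised indicator of $[x_0,b)$, yielding the uniform distribution on that interval.

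The only real obstacle is bookkeeping with the explicit Green's function; there is no deep analytic point. A purely probabilistic alternative would perform the time change $t\mapsto t/\mu$, under which $W$ turns into $x+t+(\sigma/\sqrt{\mu})\tilde B_t$ and the rescaled exit time $\mu\tau$ converges in $L^1$ to $b-x$, giving the same identification $\mu\,\mathbb{E}_x[\int_0^\tau f(W_s)\,ds]\to\int_x^b f(y)\,dy$ by bounded convergence; this is the heuristic motivating the whole argument.
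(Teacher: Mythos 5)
Your proposal is correct and follows essentially the same route as the paper: the renewal identity you start from is exactly the Green--function representation \eqref{e:invmeas} used there, and the rest of your argument (explicit Dirichlet Green's function, pointwise limit $\mu\,g^D_\mu(x,y)\to\mathbf{1}_{\{x<y\}}$ up to normalisation, dominated convergence in $\nu(dx)$ and then in $dy$) mirrors the paper's proof, with your remark on the null set where $\nu((a,y))\ne\nu((a,y])$ being a welcome extra precision.
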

\begin{proof}
We use the formula 
\begin{equation}\label{e:invmeas}
\pi_{\nu,\mu,\sigma}(A)=\frac{\int_A\,dy\int_a^b\nu(dx)g^{(a,b)}_{\sigma,\mu}(x,y)}{\int_a^b\,dy\int_a^b\nu(dx)g^{(a,b)}_{\sigma,\mu}(x,y)},
\end{equation}
where $g^{(a,b)}_{\sigma,\mu}(x,y)$ denotes the Green's function of the operator $L^{\sigma,\mu}$ in $(a,b)$ with Dirichlet boundary conditions. Different versions of proofs of this formula can be found e.g. in \cite{BaP1} and \cite{KW}. According to general theory (see e.g. \cite{W} section 13.2) we have the following well-known explicit formula 
\begin{equation}\label{e:green}
g^{(a,b)}_{\sigma,\mu}(x,y)=
\begin{cases}
 \frac{2\mu}{\sigma^2\bigl(e^{-2\frac{\mu}{\sigma^2}b}-e^{-2\frac{\mu}{\sigma^2}a}\bigr)}u_a(x)u_b(y)v(y) & \text{if $a< x< y < b$}\\
 \frac{2\mu}{\sigma^2\bigl(e^{-2\frac{\mu}{\sigma^2}b}-e^{-2\frac{\mu}{\sigma^2}a}\bigr)}u_b(x)u_a(y)v(y) & \text{if $a < y < x < b$},
\end{cases}
\end{equation} 
where
\begin{displaymath}
u_a(x)=\frac{\sigma^2}{2\mu}(e^{-2\frac{\mu}{\sigma^2}a}- e^{-2\frac{\mu}{\sigma^2}x})\,\text{   and   }\,u_b(x)=\frac{\sigma^2}{2\mu}\bigl(e^{-2\frac{\mu}{\sigma^2}b}-e^{-2\frac{\mu}{\sigma^2}x}\bigr)
\end{displaymath}
and
\begin{displaymath}
v(x):=e^{2\frac{\mu}{\sigma^2}x}.
\end{displaymath}
The function $u_a$ ($u_b$) is a solution to $L^{\sigma, \mu}u=0$ with $u_a(0)=0$ and $u_a'(0)=1$ ($u_b(0)=0$ and $u_b'(0)=1$). An elementary calculation gives
\begin{equation*}
g^{(a,b)}_{\sigma,\mu}(x,y)=
\begin{cases}
 \frac{\sigma^2}{2\mu}\cdot\frac{1-e^{-2\frac{\mu}{\sigma^2}(x-a)}}{e^{-2\frac{\mu}{\sigma^2}(b-a)}-1}\bigl(e^{-2\frac{\mu}{\sigma^2}(b-y)}-1\bigr) & \text{if $a< x< y < b$}\\
  \frac{\sigma^2}{2\mu}\cdot\frac{e^{-2\frac{\mu}{\sigma^2}(b-x)}-1}{e^{-2\frac{\mu}{\sigma^2}(b-a)}-1}\cdot e^{-2\frac{\mu}{\sigma^2}(x-a)}\bigl(e^{-2\frac{\mu}{\sigma^2}(b-y)}-1\bigr) & \text{if $a < y < x < b$}.
\end{cases}
\end{equation*} 
This shows that 
\begin{equation}\label{e:greenasympt}
\lim_{\mu \rightarrow \infty}\frac{2\mu}{\sigma^2}\cdot g^{(a,b)}_{\sigma,\mu}(x,y)=
\begin{cases}
1 &\text{if $a< x< y < b$}\\
0 &\text{if $a < y < x < b$}
\end{cases}
\end{equation}
Formulas \eqref{e:invmeas} and \eqref{e:greenasympt} together with an elementary application of Lebesgue's theorem of dominated convergence now imply the assertion of the proposition.
\end{proof}
In particular in the case of a deterministic jump distribution Proposition \ref{p:invmeas} is intuitively plausible. If the jump distribution is $\delta_{x_0}$ and the drift gets large one effectively ends up with a diffusion on the reduced interval $(x_0,b)$, as the strong drift does not allow to visit the region $(a,x_0)$. In the large drift limit one thus can expect that the diffusion essentially looks like a motion on the circle with diameter $b-x_0$. This explains the form of the large drift limit of the invariant distribution. On the other hand this probabilistic heuristic reasoning leads to the conjecture that the spectral gap does remain bounded in the large drift limit, whereas the bottom of the spectrum of the operator $L^{\sigma,\mu}$ in $(a,b)$ with absorbing boundary of course tends to infinity. The bottom of the spectrum is calculated in the following
\begin{lemma}\label{l:exittime}
Let $(X_t)_{t \geq 0}=(\sigma B_t + \mu t)_{t \geq 0}$ denote a one-dimensional Brownian motion with variance $\sigma$ and drift $\mu$ and let $\tau_{(a,b)}$ be the first exit time from the interval $(a,b)$. Then 
\begin{equation}\label{e:exit}
-\lim_{t \rightarrow \infty}\frac{1}{t}\log \sup_{x \in (a,b)}\mathbb{P}_x\bigl(\tau_{(a,b)} > t\bigr) = \frac{\sigma^2 \pi^2}{2(b-a)^2} + \frac{\mu^2}{2\sigma^2}.
\end{equation}
In particular the bottom of the spectrum of $-L^{\sigma,\mu}=-\frac{\sigma^2}{2}\frac{d^2}{dx^2}-\mu\frac{d}{dx}$ in $(a,b)$ with Dirichlet boundary conditions coincides with the right hand side of \eqref{e:exit}.
\end{lemma}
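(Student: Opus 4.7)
The plan is to reduce the problem to driftless Brownian motion via a Cameron--Martin transformation and then invoke the classical eigenfunction expansion of the Dirichlet heat semigroup on an interval. Let $\mathbb{P}^0_x$ denote the law of $(x+\sigma B_t)_{t\geq 0}$, a driftless Brownian motion with variance $\sigma$ started at $x$, and let $(X_s)_{s\geq 0}$ denote the coordinate process. Girsanov's theorem gives
\begin{equation*}
\frac{d\mathbb{P}_x}{d\mathbb{P}^0_x}\bigg|_{\mathcal{F}_t} = \exp\!\left(\tfrac{\mu}{\sigma^2}(X_t-x) - \tfrac{\mu^2}{2\sigma^2}\,t\right).
\end{equation*}
On the event $\{\tau_{(a,b)}>t\}$ the coordinate $X_t$ lies in $(a,b)$, so this density is sandwiched between $e^{\pm \mu(b-a)/\sigma^2}\,e^{-\mu^2 t/(2\sigma^2)}$. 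Hence there are positive constants $c_\pm$, independent of $t$ and uniform in $x\in(a,b)$, with
\begin{equation*}
c_-\,e^{-\mu^2 t/(2\sigma^2)}\,\mathbb{P}^0_x\!\bigl(\tau_{(a,b)}>t\bigr) \;\leq\; \mathbb{P}_x\!\bigl(\tau_{(a,b)}>t\bigr) \;\leq\; c_+\,e^{-\mu^2 t/(2\sigma^2)}\,\mathbb{P}^0_x\!\bigl(\tau_{(a,b)}>t\bigr).
\end{equation*}
Taking suprema in $x$ and logarithms, the drifted exponential decay rate equals $\mu^2/(2\sigma^2)$ plus the driftless one.

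For the driftless piece, the eigenfunction expansion of $(\sigma^2/2)\,d^2/dx^2$ on $(a,b)$ with Dirichlet conditions yields
\begin{equation*}
\mathbb{P}^0_x\!\bigl(\tau_{(a,b)}>t\bigr) = \sum_{\substack{n\geq 1\\ n\text{ odd}}} \tfrac{4}{n\pi}\,\sin\!\tfrac{n\pi(x-a)}{b-a}\; e^{-\frac{\sigma^2 n^2\pi^2}{2(b-a)^2}t}.
\end{equation*}
The $n=1$ term dominates with a coefficient that is strictly positive on $(a,b)$, so $\sup_{x\in(a,b)}\mathbb{P}^0_x(\tau_{(a,b)}>t)$ decays exponentially at the rate $\sigma^2\pi^2/(2(b-a)^2)$. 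Combining the two steps produces the identity \eqref{e:exit}.

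For the second assertion I would verify by a direct one-line computation the gauge identity
\begin{equation*}
L^{\sigma,\mu}f = e^{\frac{\mu}{\sigma^2}x}\!\left(\tfrac{\sigma^2}{2}\tfrac{d^2}{dx^2}-\tfrac{\mu^2}{2\sigma^2}\right)\!\bigl(e^{-\frac{\mu}{\sigma^2}x}f\bigr).
\end{equation*}
Conjugation by the smooth positive factor $e^{\mu x/\sigma^2}$ preserves the Dirichlet domain on $(a,b)$, so the Dirichlet spectrum of $-L^{\sigma,\mu}$ is exactly $\{\sigma^2 n^2\pi^2/(2(b-a)^2)+\mu^2/(2\sigma^2)\}_{n\geq 1}$, whose minimum is the right hand side of \eqref{e:exit}.

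No genuine conceptual obstacle arises: the Cameron--Martin step and the Dirichlet eigenfunction expansion are textbook, and the gauge transformation is immediate. The only point requiring care is the uniformity of the decay rate in $x\in(a,b)$, which follows from the sandwich bounds together with the strict positivity of the first Dirichlet eigenfunction on the open interval; it also shows that taking the supremum in $x$ is not needed to obtain the stated rate.
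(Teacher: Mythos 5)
Your argument is correct, and for the probabilistic half it takes a genuinely different route from the paper -- in fact precisely the route the authors mention in passing ("an alternative proof might be based on the Girsanov formula") but do not carry out. The paper instead cites the general result from Pinsky's book identifying the exponential decay rate of $\sup_x\mathbb{P}_x(\tau_{(a,b)}>t)$ with the bottom of the Dirichlet spectrum of the generator, realized as the self-adjoint operator associated with the Dirichlet form in the weighted space $L^2((a,b),e^{2\mu x/\sigma^2}dx)$, and then computes that bottom via the unitary map $Uf=e^{-\mu x/\sigma^2}f$ -- which is exactly your gauge transformation, so the two proofs coincide on the spectral half. Your Girsanov sandwich plus the explicit odd-$n$ sine expansion is more elementary and self-contained (no appeal to the large-deviation/spectral identification theorem), and it yields the slightly stronger pointwise statement that the rate is attained for each fixed $x\in(a,b)$ without the supremum; the paper's route is shorter and generalizes immediately to non-constant coefficients where no explicit eigenfunction expansion is available. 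One small correction: your conjugation identity has the exponents on the wrong sides. As written, $e^{\mu x/\sigma^2}\bigl(\tfrac{\sigma^2}{2}\tfrac{d^2}{dx^2}-\tfrac{\mu^2}{2\sigma^2}\bigr)\bigl(e^{-\mu x/\sigma^2}f\bigr)=\tfrac{\sigma^2}{2}f''-\mu f'=L^{\sigma,-\mu}f$; the correct identity for $L^{\sigma,\mu}$ is $L^{\sigma,\mu}f=e^{-\mu x/\sigma^2}\bigl(\tfrac{\sigma^2}{2}\tfrac{d^2}{dx^2}-\tfrac{\mu^2}{2\sigma^2}\bigr)\bigl(e^{\mu x/\sigma^2}f\bigr)$. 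This is harmless for the conclusion, since either conjugation is by a smooth positive function with bounded inverse on the bounded interval and the resulting spectrum $\{\sigma^2n^2\pi^2/(2(b-a)^2)+\mu^2/(2\sigma^2)\}_{n\geq 1}$ is the same, but the sign should be fixed.
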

\begin{proof}
The process $(X_t)_{t \geq 0}$ is reversible and it is generated by the closure $q$ of the symmetric Dirichlet form
\begin{displaymath}
C_c^{\infty}((a,b))\ni f \mapsto  \frac{\sigma^2}{2}\int_a^b|f'(x)|^2\exp{(2\frac{\mu}{\sigma^2}x)}\,dx,
\end{displaymath}
considered in the Hilbert space $L^2((a,b),e^{2\frac{\mu}{\sigma^2}x}\,dx)$. It is well-known (see section 3.6 in \cite{P95}) that the left hand side of equation \eqref{e:exit} coincides with the bottom of the spectrum of the operator $L= -\frac{\sigma^2}{2}\frac{d^2}{dx^2} - \mu\frac{d}{dx}$ (with Dirichlet boundary conditions), which is uniquely associated to the Dirichlet form $q$. Set 
\begin{displaymath}
U:L^2((a,b),dx)\rightarrow L^2((a,b),e^{2\frac{\mu}{\sigma^2}x}\,dx),\, Uf(x) = e^{-\frac{\mu}{\sigma^2}x}f(x).
\end{displaymath}
Then $U$ is obviously unitary and a direct calculation shows that $U^{-1}LU$ coincides with the operator $-\frac{\sigma^2}{2} \frac{d^2}{dx^2} + \frac{\mu^2}{2\sigma^2}$, considered as a selfadjoint operator in $L^2((a,b),dx)$ with Dirichlet boundary condition. Since $\frac{\sigma^2\pi^2}{2(b-a)^2}$ is the smallest eigenvalue of $-\frac{\sigma^2}{2} \frac{d^2}{dx^2}$ in the interval $(a,b)$ with Dirichlet boundary conditions, we conclude that
\begin{displaymath}
-\lim_{t \rightarrow \infty}\frac{1}{t}\log \sup_{x \in (a,b)}\mathbb{P}_x\bigl(\tau_{(a,b)} > t\bigr) = \frac{\sigma^2 \pi^2}{2(b-a)^2} + \frac{\mu^2}{2\sigma^2}.
\end{displaymath} 
\end{proof}
An alternative proof of Lemma \ref{l:exittime} might be based on the Girsanov formula, of course. As expected the bottom of the spectrum of $-L^{\sigma,\mu}$ diverges to infinity if $|\mu| \rightarrow \infty$.\\
The following relations for the total-variation-distance are well-known and will be used throughout this paper.
It can be readily established that
\begin{equation}
d(t)=\sup_{x\in (a,b)}\bigl\|\mathbb{P}_x\bigl(X_t\in\cdot)-\pi_{\mu,\nu,\sigma}(\cdot) \bigr\|_{TV}\le \sup_{x,y\in (a,b)}\bigl\|\mathbb{P}_x\bigl(X_t\in\cdot\bigr)-\mathbb{P}_y\bigl(X_t\in\cdot)\bigr\|_{TV}=\bar{d}(t)
\end{equation}
On the other hand, by triangular- and coupling-inequality we have
\begin{equation}\label{didibarinequality}
\bar{d}(t)\le 2d(t)\le 2\mathbb{P}\bigl(\tau_{coup}>t\bigr), 
\end{equation}
where $\tau_{coup}$ denotes the coupling time of both processes defined below. Hence, we aim to find bounds for $\mathbb{P}\bigl(\tau_{coup}>t\bigr)$. 
>From \eqref{didibarinequality} it follows that 
\[
\gamma_1(L^{\sigma,\mu},\delta_{x_0})\ge - \lim_{t\rightarrow\infty}\frac{1}{t}\log \mathbb{P}\bigl(\tau_{coup}>t\bigr).
\]

\subsection{Upper Bound on $\gamma_1(L^{\sigma,\mu},\delta_{x_0})$}
In this subsection we establish an upper bound on $\gamma_1(L^{\sigma,\mu},\delta_{x_0})$. We start with several Lemmata, which will be used in the proof of the upper bound for the spectral gap.
\begin{lemma}\label{l:epsilonterm}
For $c<0<d$ let
\begin{displaymath}
\tau_{(c,d)}=\inf\lbrace t >0\mid |B_t| \not\in (c,d) \rbrace
\end{displaymath}
denote the first exit time of a standard Brownian motion $(B_t)_{t \geq 0}$ from the interval $I:=(c,d)$. Then there exists $\varepsilon >0$ and $\tilde{t}>0$ such that for all $t\geq \tilde{t}$
\begin{displaymath}
\mathbb{P}(\tau_{(c,d)} \in [t-\varepsilon,t])\leq \frac{1}{2}\mathbb{P}(\tau_{(c,d)}>t).
\end{displaymath} 
\end{lemma}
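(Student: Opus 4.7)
The plan is to derive a sharp exponential asymptotic for the survival function $t\mapsto\mathbb{P}(\tau_{(c,d)}>t)$ and then to read off the claim from the ratio $\mathbb{P}(\tau_{(c,d)}>t-\varepsilon)/\mathbb{P}(\tau_{(c,d)}>t)$ for small $\varepsilon$. The tool that makes this asymptotic fully explicit is the eigenfunction expansion for Brownian motion on $(c,d)$ killed at the boundary.

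Concretely, first I would introduce the Dirichlet eigenvalues $\lambda_k=k^2\pi^2/(2(d-c)^2)$ of $-\tfrac12\partial_x^2$ on $(c,d)$ and the corresponding $L^2$-normalised sine eigenfunctions $\phi_k$. Expanding the killed transition density as $p_t^{(c,d)}(x,y)=\sum_{k\geq 1}e^{-\lambda_k t}\phi_k(x)\phi_k(y)$ and integrating in $y$ yields
\[
\mathbb{P}_0(\tau_{(c,d)}>t)=\sum_{k=1}^{\infty}c_k\,e^{-\lambda_k t},\qquad c_k:=\phi_k(0)\int_c^d\phi_k(y)\,dy.
\]
The leading coefficient $c_1$ is strictly positive, since the ground state $\phi_1$ is strictly positive on $(c,d)$ and $0\in(c,d)$ by hypothesis. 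A routine tail bound (using $\|\phi_k\|_\infty\le\sqrt{2/(d-c)}$ to control $|c_k|$ uniformly) then gives the sharp asymptotic
\[
\mathbb{P}_0(\tau_{(c,d)}>t)=c_1\,e^{-\lambda_1 t}\bigl(1+O(e^{-(\lambda_2-\lambda_1)t})\bigr)\qquad(t\to\infty).
\]

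Combining this with the identity $\mathbb{P}(\tau_{(c,d)}\in[t-\varepsilon,t])=\mathbb{P}(\tau_{(c,d)}>t-\varepsilon)-\mathbb{P}(\tau_{(c,d)}>t)$ gives
\[
\lim_{t\to\infty}\frac{\mathbb{P}(\tau_{(c,d)}\in[t-\varepsilon,t])}{\mathbb{P}(\tau_{(c,d)}>t)}=e^{\lambda_1\varepsilon}-1,
\]
so picking any $\varepsilon\in(0,\lambda_1^{-1}\log(3/2))$ makes the limit strictly less than $\tfrac12$, and taking $\tilde t$ sufficiently large gives the required inequality for all $t\ge\tilde t$. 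The only point of substance in the whole argument is the non-vanishing of the leading coefficient $c_1$; everything else is a direct spectral computation, and I do not foresee any real obstacle.
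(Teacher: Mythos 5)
Your argument is correct and follows essentially the same route as the paper: both expand $\mathbb{P}(\tau_{(c,d)}>t)$ in the Dirichlet eigenbasis, isolate the strictly positive leading coefficient, and choose $\varepsilon$ so small that the ground-state contribution $e^{\lambda_1\varepsilon}-1$ to the ratio stays below $\tfrac12$, with $\tilde t$ absorbing the exponentially smaller higher modes. Your formulation via the limit of the ratio is a slightly cleaner packaging of the same two estimates the paper carries out separately.
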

\begin{proof}
Let $(\bar{\lambda}^I_k)_{k=0}^{\infty}$ be the sequence of eigenvalues of the operator $-\frac{1}{2}\frac{d^2}{dx^2}$ in $I=(c,d)$ with Dirichlet boundary conditions and let $(\varphi_k^I)_{k \in \mathbb{N}}$ denote the sequence of associated eigenfunctions. By the spectral decomposition of Brownian motion, killed when exiting the interval $I$, we have
\begin{equation}
\begin{split}
\mathbb{P}(\tau_{(c,d)}>t) &= \sum_{i=0}^{\infty}e^{-\bar{\lambda}_i^It}\varphi_i^I(0)\int_{I}\varphi_i^I(y)\,dy \\
&= e^{-\bar{\lambda}_0^It}\varphi_0^I(0)\int_I\varphi_0^I(y)\,dy + O(e^{-\bar{\lambda}_1^It}).
\end{split}
\end{equation}
Hence we have for some suitable constant $C>0$
\begin{equation}
\begin{split}
\mathbb{P}(\tau_{(c,d)} \in [t-\varepsilon,t]) &= \sum_{i=0}^{\infty}[e^{-\bar{\lambda}_i^I(t-\varepsilon)}-e^{-\bar{\lambda}_i^It}]\varphi_i^I(0)\int_I\varphi_i^I(y)\,dy \\
&= e^{-\bar{\lambda}_0^It}(e^{\bar{\lambda}_0^I\varepsilon}-1)\varphi_0^I(0)\int_I\varphi_0^I(y)\,dy + Ce^{-\bar{\lambda}_1^I(t-\varepsilon)}.
\end{split}
\end{equation}
We can obviously choose $\varepsilon$ small enough such that 
\begin{displaymath}
(e^{\bar{\lambda}_0^I\varepsilon}-1)\varphi_0^I(0)\int_I\varphi_0^I(y)\,dy \leq \frac{1}{4}.
\end{displaymath}
Moreover, there exists $\tilde{t}$ such that for every $t>\tilde{t}$ one has $Ce^{-\bar{\lambda}_1^I(t-\varepsilon)} \leq \frac{1}{4}e^{-\bar{\lambda}_0^It}$. This proves the assertion of the Lemma.
\end{proof}
In the following Lemma we show, that for certain pairs of deterministic initial distribution there exists a coupling such that both diffusions with jump boundary coalesce quite fast.
\begin{lemma}\label{l:fastcoupl}
We denote by $x_0=(a+b)/2$ the center of the interval $(a,b)$ and for $a < x < x_0$ we set $x_S:x+(b-a)/2$. Then for all $t> 0$
\begin{equation*}
\begin{split}
 \sup_{x \in (a,x_0)}\bigl\| \mathbb{P}_x(X_t \in \cdot) - \mathbb{P}_{x_S}(X_t \in \cdot)\bigr\|_{TV} & \leq \sup_{x \in (a,x_0)}\mathbb{P}_{x}(\tau > t)\\
 &\leq e^{\frac{b-a}{2}\frac{\mu}{\sigma^2}}e^{-\Lambda(\mu) t},
\end{split}
\end{equation*}
where $\tau$ is the first exit time of $(\sigma B_t + \mu t)_{t \geq 0}$ from the interval $(a,x_0)$ and $\Lambda(\mu):=\frac{\mu^2}{2\sigma^2}$.
\end{lemma}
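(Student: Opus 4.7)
The first inequality is a standard coupling argument. The plan is to construct a synchronous coupling driven by a single standard Brownian motion $(B_t)_{t \geq 0}$: run a process $(Y_t)$ started at $x$ and a process $(Z_t)$ started at $x_S = x + (b-a)/2$, both satisfying $dY_t = dZ_t = \mu\,dt + \sigma\,dB_t$ between boundary hits and each jumping to $x_0$ upon reaching $\{a,b\}$. Between jumps one has $Z_t - Y_t = (b-a)/2$.

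The crucial observation is that in this coupling the first boundary hit occurs exactly at $\tau$, the exit time of $Y$ from $(a,x_0)$. Indeed, $Y_t = a$ corresponds to $Z_t = x_0$, and $Y_t = x_0$ corresponds to $Z_t = b$; so whichever of $Y$ or $Z$ reaches $\partial(a,b)$ first does so when the other is precisely at $x_0$. After the jump both processes sit at $x_0$, and driven thereafter by the same noise they remain coupled forever. Hence $\tau_{coup} \leq \tau$, and the coupling inequality yields
\begin{displaymath}
\|\mathbb{P}_x(X_t\in\cdot)-\mathbb{P}_{x_S}(X_t\in\cdot)\|_{TV}\leq \mathbb{P}(\tau_{coup}>t)\leq \mathbb{P}_x(\tau>t),
\end{displaymath}
from which the first inequality of the lemma follows by taking the supremum over $x\in(a,x_0)$.

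For the second inequality, the plan is to remove the drift by a Girsanov change of measure. Let $\mathbb{Q}_x$ denote the law under which $X$ is a driftless Brownian motion with variance $\sigma^2$ starting from $x$. A direct application of Girsanov's theorem gives
\begin{displaymath}
\frac{d\mathbb{P}_x}{d\mathbb{Q}_x}\bigg|_{\mathcal{F}_t}=\exp\!\left(\frac{\mu}{\sigma^2}(X_t-x)-\Lambda(\mu)\,t\right).
\end{displaymath}
On the event $\{\tau>t\}$ the trajectory is confined to $(a,x_0)$, hence $X_t-x\leq x_0-a=(b-a)/2$. Substituting this pointwise bound and using $\mathbb{Q}_x(\tau>t)\leq 1$ yields
\begin{displaymath}
\mathbb{P}_x(\tau>t)\leq e^{(b-a)\mu/(2\sigma^2)}e^{-\Lambda(\mu)t}
\end{displaymath}
uniformly in $x\in(a,x_0)$, which is exactly the second inequality.

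The only genuinely non-trivial step is the coupling assertion: one must observe the geometric coincidence that the gap $(b-a)/2$ is tuned so that $\{Y\in\{a,x_0\}\}$ and $\{Z\in\{x_0,b\}\}$ occur simultaneously, so that at the very instant either process reaches the boundary the other sits on the atom $x_0$ of the jump distribution. Once this is noted, both halves of the proof reduce to routine computations.
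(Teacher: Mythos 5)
Your proof is correct and follows essentially the same route as the paper: the synchronous coupling with offset $(b-a)/2$ and the observation that the first boundary hit of either copy occurs exactly when the other sits at the atom $x_0$ is precisely the paper's argument for the first inequality. For the second inequality the paper uses the unitary (ground-state) transform from its Lemma on exit times but explicitly notes Girsanov's formula as an equivalent alternative, which is what you carry out, arriving at the same bound.
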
  
\begin{proof}
Let $\lambda > 0$ be given. Let $L^{\sigma,\mu}:=\frac{\sigma^2}{2}\frac{d^2}{dx^2}+\mu\frac{d}{dx}$. Set 
\begin{displaymath}
X_t=x + \mu t + \sigma B_t\,\text{   and   }\,Y_t=x+ \frac{b-a}{2} + \mu t + \sigma B_t = x_S+ \mu t + \sigma B_t
\end{displaymath} 
and continue the process such that both give $L^{\sigma,\mu}$-diffusions with jump boundary and jump distribution $\delta_{x_0}$. Then immediately after the random time
\begin{displaymath}
\tau = \inf\bigl\lbrace t>0 \mid X_t \notin (a,b)\, \text{  or  }\,Y_t \notin (a,b)\bigr\rbrace
\end{displaymath}
the processes $(X_t)_{t \geq 0}$ and $(Y_t)_{t \geq 0}$ coalesce. Therefore according to the coupling inequality it follows that for all $a \leq x \leq x_0$
\begin{displaymath}
\bigl\| \mathbb{P}_x(X_t \in \cdot) - \mathbb{P}_{x_S}(X_t \in \cdot)\bigr\|_{TV} \leq \sup_{x \in (a,x_0)}\mathbb{P}_x(\tau > t).
\end{displaymath}
The random time $\tau$ can be rewritten as a first exit time of a Brownian motion with drift, more precisely
\begin{displaymath}
\tau=\inf\bigl\lbrace t > 0 \mid x + \mu t + \sigma B_t \notin \bigl(a, (a+b)/2 \bigr) \bigr \rbrace.
\end{displaymath}
This gives the first inequality. Using the unitary equivalence $U$ from the proof of Lemma \ref{l:exittime} with $L$ (or alternatively Girsanov's formula)  we have
\begin{equation*}
\begin{split}
\mathbb{P}_x(\tau > t) &= e^{tL^{\sigma,\mu}}\mathbf{1}(x) = \bigl[Ue^{t\tilde{L}}(U^{-1}\mathbf{1})\bigr](x) \\
&= e^{-\frac{\mu}{\sigma^2} x}e^{-\frac{\mu^2}{2\sigma^2}t}\mathbb{E}_x\bigl[e^{\frac{\mu}{\sigma^2} B_t};\forall s\leq t:B_s \in (a,(a+b)/2)\bigr] \\
&\leq   e^{-\frac{\mu}{\sigma^2} a}e^{\frac{\mu}{\sigma^2}(a+b)/2} e^{-\frac{\mu^2}{2\sigma^2}t}= e^{\frac{\mu}{\sigma^2}\frac{b-a}{2}}e^{-\frac{\mu^2}{2\sigma^2}t},
\end{split}
\end{equation*}
where $\tilde{L}:=\frac{1}{2}\frac{d^2}{dx^2}-\frac{\mu^2}{2\sigma^2}$ is the selfadjoint operator in $L^2(a,b)$ with Dirichlet boundary conditions. This implies the assertion of the Lemma.
\end{proof}
\begin{lemma}\label{l:exponentialconvol}
Let $c<0<d$ and let $\tau_{(c,d)}$ denote the first exit time of a standard Brownian motion from the interval $(c,d)$ and let $\tau$ denote the first exit time of $(\sigma B_t + \mu t)_{t \geq 0}$ from the interval $(a,(a+b)/2)$.  Then there exist constants $\tilde{\mu}>0$ and $0<c<1$ such that for $\mu > \tilde{\mu}$ and $t\geq \tilde{t}$
\begin{displaymath}
\int_0^t \sup_{x \in (a,x_0)}\mathbb{P}_x(\tau > t-s)\mathbb{P}(\tau_{(c,d)} \in ds) \leq c\mathbb{P}(\tau_{(c,d)} > t).
\end{displaymath}
\end{lemma}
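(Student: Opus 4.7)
The plan is to split the integral at $t-\varepsilon$, where $\varepsilon>0$ is the constant furnished by Lemma~\ref{l:epsilonterm}, and to treat the two resulting pieces separately. The near-endpoint piece is controlled trivially: using $\sup_{x\in(a,x_0)}\mathbb{P}_x(\tau>t-s)\le 1$ and Lemma~\ref{l:epsilonterm},
\begin{displaymath}
\int_{t-\varepsilon}^{t}\sup_{x\in(a,x_0)}\mathbb{P}_x(\tau>t-s)\,\mathbb{P}(\tau_{(c,d)}\in ds)\le \mathbb{P}\bigl(\tau_{(c,d)}\in[t-\varepsilon,t]\bigr)\le \tfrac{1}{2}\mathbb{P}(\tau_{(c,d)}>t).
\end{displaymath}

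For the bulk piece $\int_0^{t-\varepsilon}$, I would plug in the quantitative estimate from Lemma~\ref{l:fastcoupl}, namely $\sup_{x}\mathbb{P}_x(\tau>t-s)\le e^{\frac{b-a}{2}\frac{\mu}{\sigma^2}}e^{-\Lambda(\mu)(t-s)}$, to arrive at
\begin{displaymath}
\int_0^{t-\varepsilon}\ldots\,\le\, e^{\frac{b-a}{2}\frac{\mu}{\sigma^2}-\Lambda(\mu)t}\int_0^{t-\varepsilon}e^{\Lambda(\mu)s}\,\mathbb{P}(\tau_{(c,d)}\in ds).
\end{displaymath}
The inner integral is evaluated by integration by parts against $-d\mathbb{P}(\tau_{(c,d)}>s)$; discarding the resulting non-positive boundary term at $s=t-\varepsilon$ and substituting the global tail estimate $\mathbb{P}(\tau_{(c,d)}>s)\le \tilde{C}e^{-\bar{\lambda}_0^I s}$ (valid for all $s\geq 0$ from the spectral expansion used in the proof of Lemma~\ref{l:epsilonterm}, with $\bar{\lambda}_0^I$ the principal Dirichlet eigenvalue of $-\frac{1}{2}\frac{d^2}{dx^2}$ on $(c,d)$), one finds that the inner integral is dominated by $\frac{\Lambda(\mu)\tilde{C}}{\Lambda(\mu)-\bar{\lambda}_0^I}e^{(\Lambda(\mu)-\bar{\lambda}_0^I)(t-\varepsilon)}$. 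Putting everything together and invoking the matching spectral lower bound $\mathbb{P}(\tau_{(c,d)}>t)\ge \tilde{c}\,e^{-\bar{\lambda}_0^I t}$ (for $t$ large enough), the bulk piece is bounded above by a constant multiple of
\begin{displaymath}
\frac{\Lambda(\mu)}{\Lambda(\mu)-\bar{\lambda}_0^I}\cdot e^{\frac{b-a}{2}\frac{\mu}{\sigma^2}-\Lambda(\mu)\varepsilon}\cdot\mathbb{P}(\tau_{(c,d)}>t).
\end{displaymath}

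The decisive point is that the exponent satisfies $\frac{b-a}{2}\frac{\mu}{\sigma^2}-\Lambda(\mu)\varepsilon = \frac{\mu}{2\sigma^2}\bigl((b-a)-\mu\varepsilon\bigr)\to -\infty$ as $\mu\to\infty$, because $\Lambda(\mu)=\mu^2/(2\sigma^2)$ grows quadratically while the competing term is only linear in $\mu$; simultaneously $\Lambda(\mu)/(\Lambda(\mu)-\bar{\lambda}_0^I)\to 1$. Choosing $\tilde{\mu}$ large enough that the whole constant multiple is at most $\tfrac{1}{4}$, and $\tilde{t}$ at least as large as the threshold from Lemma~\ref{l:epsilonterm} and from the spectral asymptotics, yields the desired bound with constant $\tfrac{3}{4}<1$.

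The main obstacle lies in extracting the correct $t$-dependence in the bulk piece: a naive estimate such as $\sup_x\mathbb{P}_x(\tau>t-s)\le \sup_x\mathbb{P}_x(\tau>\varepsilon)$ for $s\in[0,t-\varepsilon]$ produces only a $t$-independent bound, which cannot be compared to $\mathbb{P}(\tau_{(c,d)}>t)\sim e^{-\bar{\lambda}_0^I t}$ for large $t$. Keeping the full $s$-dependence of the Lemma~\ref{l:fastcoupl} estimate under the integral and performing the integration by parts against the sharp spectral tail of $\tau_{(c,d)}$ is precisely what recovers the required factor $e^{-\bar{\lambda}_0^I t}$ on the right-hand side and permits the comparison with $\mathbb{P}(\tau_{(c,d)}>t)$.
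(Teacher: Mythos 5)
Your proposal is correct and follows essentially the same route as the paper: split at $t-\varepsilon$, control the endpoint piece via Lemma~\ref{l:epsilonterm}, and handle the bulk via the Lemma~\ref{l:fastcoupl} bound, integration by parts, and the spectral tail estimate $\mathbb{P}(\tau_{(c,d)}>s)\le Ce^{-\bar{\lambda}_0 s}$, with the quadratic growth of $\Lambda(\mu)$ overwhelming the linear exponent $\frac{b-a}{2}\frac{\mu}{\sigma^2}$. The only cosmetic difference is that you drop the positive boundary contribution $\mathbb{P}(\tau_{(c,d)}>0)=1$ at $s=0$ from the integration by parts, which the paper retains (and dismisses as harmless); it is indeed negligible against $e^{(\Lambda(\mu)-\bar{\lambda}_0)(t-\varepsilon)}$ for $t\ge\tilde{t}$.
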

\begin{proof}
First we obviously have
\begin{equation}\label{e:decomp}
\begin{split}
\int_0^t  \sup_{x \in (a,x_0)}\mathbb{P}_x(\tau > t-s) \mathbb{P}(\tau_{(c,d)} \in ds) &= \int_0^{t-\varepsilon} \sup_{x \in (a,x_0)}\mathbb{P}_x(\tau > t-s)\mathbb{P}(\tau_{(c,d)} \in ds) \\
&+ \int_{t-\varepsilon}^t \sup_{x \in (a,x_0)}\mathbb{P}_x(\tau > t-s)\mathbb{P}(\tau_{(c,d)} \in ds)
\end{split}
\end{equation}
for every $\varepsilon > 0$. According to Lemma \ref{l:epsilonterm} we can choose $\varepsilon > 0$ such that for some $\tilde{t} \in (0,\infty)$ and all $t\geq \tilde{t}$ 
\begin{equation}
\int_{t-\varepsilon}^t \sup_{x \in (a,x_0)}\mathbb{P}_x(\tau > t-s) \mathbb{P}(\tau_{(c,d)} \in ds) \leq \mathbb{P}(\tau_{(c,d)} \in [t-\varepsilon,t]) \leq \frac{1}{2}\mathbb{P}(\tau_{(c,d)}>t).
\end{equation}
For the first term in \eqref{e:decomp} observe first that according to Lemma \ref{l:fastcoupl} and the integration by parts formula

\begin{equation}\label{e:intparts}
\begin{split}
\int_0^{t-\varepsilon}  &\sup_{x \in (a,x_0)}\mathbb{P}_x(\tau > t-s) \mathbb{P}(\tau_{(c,d)} \in ds)\le e^{\frac{b-a}{2}\frac{\mu}{\sigma^2}} \int_0^{t-\varepsilon} e^{-\Lambda(\mu)(t-s)}\,\mathbb{P}\bigl(\tau_{(c,d)}\in ds\bigr)\\ 
&=-e^{\frac{b-a}{2}\frac{\mu}{\sigma^2}}e^{-\Lambda(\mu)\varepsilon}\mathbb{P}(\tau_{(c,d)} > t-\varepsilon) \\
& \,+\, e^{\frac{b-a}{2}\frac{\mu}{\sigma^2}}e^{-\Lambda(\mu) t}+ e^{\frac{b-a}{2}\frac{\mu}{\sigma^2}}\Lambda(\mu) \int_0^{t-\varepsilon}e^{-\Lambda(\mu) (t-s)}\mathbb{P}(\tau_{(c,d)} > s)\,ds \\
&\le e^{\frac{b-a}{2}\frac{\mu}{\sigma^2}}e^{-\Lambda(\mu) t}+ e^{\frac{b-a}{2}\frac{\mu}{\sigma^2}}\Lambda(\mu) \int_0^{t-\varepsilon}e^{-\Lambda(\mu) (t-s)}\mathbb{P}(\tau_{(c,d)} > s)\,ds.\\
\end{split}
\end{equation}
The first term in the last line is obviously not at all a problem for $t>\tilde{t}$, while the second has to be analyzed more carefully.
If $\bar{\lambda}_0$ denotes the lowest eigenvalue of $-\frac{1}{2}\frac{d^2}{dx^2}$ in $(c,d)$ with Dirichlet boundary conditions, we obviously have
$\lim_{t \rightarrow \infty}e^{\bar{\lambda}_0t}\mathbb{P}(\tau_{(c,d)} > t) = \tilde{C} < \infty$ there exists a constant $C>\tilde{C}$ such that for all $t\geq 0$ 
\begin{displaymath}
\mathbb{P}(\tau_{(c,d)} > t) \leq Ce^{-\bar{\lambda}_0 t}.
\end{displaymath}  
Therefore we get for the second term in \eqref{e:intparts} ($t \geq \tilde{t}$)
\begin{equation}\label{e:lastterm}
\begin{split}
e^{\frac{b-a}{2}\frac{\mu}{\sigma^2}}\Lambda(\mu) \int_0^{t-\varepsilon} &e^{-\Lambda(\mu)(t-s)}\mathbb{P}(\tau_{(c,d)} \geq s)\,ds \\
&\leq  e^{\frac{b-a}{2}\frac{\mu}{\sigma^2}}\frac{C\,\Lambda(\mu)}{\Lambda(\mu) - \bar{\lambda}_0}e^{-\Lambda(\mu) t}\bigl(e^{(\Lambda(\mu)-\bar{\lambda}_0) (t-\varepsilon)}-1\bigr) \\
&\leq e^{\frac{b-a}{2}\frac{\mu}{\sigma^2}+\bar{\lambda}_0\varepsilon-\Lambda(\mu)\varepsilon}\frac{C\,\Lambda(\mu)}{\Lambda(\mu) - \bar{\lambda}_0}e^{-\bar{\lambda}_0t}.
\end{split}
\end{equation}
Using that $\Lambda(\mu)$ is quadratic in $\mu$ we see that the last term in \eqref{e:lastterm} is of the form $c_2(\mu)e^{-\bar{\lambda}_0t}$ with $\lim_{\mu \rightarrow \infty}c_2(\mu)=0$. Thus we have shown that for $t \geq \tilde{t}$
\begin{equation*}
\int_0^t\sup_{x \in (a,x_0)} \mathbb{P}_x(\tau > t-s)\mathbb{P}(\tau_{(c,d)} \in ds) \leq c(\mu)e^{-\bar{\lambda}_0t} + \frac{1}{2}\mathbb{P}(\tau_{(c,d)} > t).
\end{equation*} 
where $c$ satisfies $\lim_{\mu \rightarrow \infty}c(\mu)=0$. Taking $\tilde{\mu}$ large enough we conclude that for $t \geq \tilde{t}$ we have $c(\mu)e^{-\bar{\lambda}_0t} \leq \frac{1}{4}\mathbb{P}(\tau_{(c,d)} > t)$, which proves the assertion of the Lemma.
\end{proof}

In order to get an upper bound on $\gamma_1(L^{\mu,\sigma},\delta_{x_0})$ it follows from \eqref{didibarinequality} that it suffices to look for lower bounds on 
\begin{equation}
\bar{d}^{\sigma,\mu}(t):=\sup_{x,y \in (a,b)}\bigl\|\mathbb{P}_x(X_t\in \cdot)-\mathbb{P}_y(X_t \in \cdot)\bigr\|_{TV},
\end{equation}
where the process $(X_t)_{t\geq 0}$ is a Brownian motion in $(a,b)$ with variance $\sigma$ and drift $\mu$ and jump distribution $\delta_{x_0}$ as also specified in Theorem \ref{t:main}. Recall that we assume without loss of generality that $\mu > 0$.

Let us define several quantities, which we use in the sequel:
\begin{equation*}
\begin{split}
\bullet\,\, t_n:=\frac{b-x_0}{\mu}n,\,\text{   }\,x=x_1:= x_0 + \frac{b-x_0}{4},\,\text{   }x_2:=x_0 + \frac{b-x_0}{2}\,\text{  and  }\,y:=x_3= x_0+\frac{3(b-x_0)}{4}.
\end{split}
\end{equation*}
Moreover, we set 
\begin{equation*}
\bullet\,\,A:=[x_0,x_2)\text{  and  }\,J:=\bigl(-\frac{b-x_0}{4\sigma}, \frac{b-x_0}{4\sigma}\bigr). 
\end{equation*}
The next lemma explains to some extent these choices. In this lemma we use the following representation of the Brownian motion in $(a,b)$ with constant variance $\sigma$, constant drift $\mu$ and jump distribution $\delta_{x_0}$. Let $(B_t)_{t \geq 0}$ be a standard one-dimensional Brownian motion, then we can build a path of the Brownian motion with constant drift $\mu$ and jump distribution $\delta_{x_0}$ and start in $x$:
\begin{equation}\label{e:represI}
X_t:=
( x + \sigma B_t + \mu t)\mathbf{1}_{\lbrace t < T_0 \rbrace }+ \sum_{i=1}^{\infty}[ x_0 + \sigma(B_{t}- B_{T_{i-1}}) + \mu (t-T_{i-1})]\mathbf{1}_{\lbrace T_{i-1} \leq t < T_i \rbrace }
\end{equation}
where 
\begin{displaymath}
\begin{split}
&T_0 := \inf\bigl\lbrace t > 0\mid x + \sigma B_t + \mu t \notin (a,b)\bigr\rbrace \text{   and   } \\
&T_{i} := \inf\bigl\lbrace t > T_{i-1} \mid x_0 + \sigma(B_{t}- B_{T_{i-1}}) + \mu (t-T_{i-1})\notin (a,b) \bigr\rbrace
\end{split}
\end{displaymath}
Let 
\begin{equation}\label{e:represII}
\tau_J:= \inf \bigl\lbrace t > 0 \mid B_t \notin \bigl(-\frac{b-x_0}{4 \sigma},\frac{b-x_0}{4\sigma}\bigr)\bigr\rbrace.
\end{equation}
The following Lemma is very elementary but we include a rather detailed proof, since the assertion might be confusing at the first sight and since it allows us to introduce the paths $\xi_l$ and $\xi_r$.
\begin{lemma}\label{l:specialchoice}
Let $(t_n)_{n \in \mathbb{N}}$, $x_0,x,x_2,y$ and $A$ be chosen as above. Then using the notation introduced in \eqref{e:represI} and \eqref{e:represII}, one has
\begin{equation*}
\mathbb{P}_x(X_{t_n} \in A,\tau_J > t_n) = \mathbb{P}_x(\tau_J > t_n)\,\text{   and   }\,\mathbb{P}_y(X_{t_n} \in A,\tau_J > t_n)=0.
\end{equation*} 
\end{lemma}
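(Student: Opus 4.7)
The plan is the following. On $\{\tau_J>t_n\}$ the standard Brownian motion driving the process satisfies $|\sigma B_s|<(b-x_0)/4$ for every $s\le t_n$; I write $\beta_s:=\sigma B_s$. I will establish that under this event
\[
X_{t_n}\;=\;z_0+\beta_{t_n},
\]
where $z_0\in\{x,y\}$ denotes the initial position. Since $|\beta_{t_n}|<(b-x_0)/4$, this places $X_{t_n}\in(x_0,x_2)\subset A$ when $z_0=x$, and in $(x_2,b)$, which is disjoint from $A=[x_0,x_2)$, when $z_0=y$; that is exactly what the lemma asserts. Internally I would introduce $\xi_l,\xi_r$ as the deterministic backbones of the representation \eqref{e:represI} (the $B\equiv 0$ specialisations) started from $y$ and from $x$ respectively: at time $t_n=n(b-x_0)/\mu$ each backbone has completed a whole number of legs and is back at its initial point, and the displayed identity says the stochastic paths differ from the backbones only by $\beta$.

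The first step is to write down the jump times. Telescoping $\beta_{T_0}+\mu T_0=b-z_0$ together with $\beta_{T_i}-\beta_{T_{i-1}}+\mu(T_i-T_{i-1})=b-x_0$ for $i\ge 1$ yields
\[
\mu T_i+\beta_{T_i}\;=\;(b-z_0)+i(b-x_0)\qquad(i\ge 0).
\]
Combined with the defining formula for $X_s$ inside its $k$-th leg (i.e.\ $s\in[T_{k-1},T_k)$ for $k\ge 1$ and $s\in[0,T_0)$ for $k=0$), this identity produces after cancellation of the $\beta_{T_{k-1}}$-terms
\[
X_{t_n}\;=\;z_0+\beta_{t_n}+(n-k)(b-x_0),
\]
valid whenever $t_n$ lies in the $k$-th leg.

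The core step is to argue that on $\{\tau_J>t_n\}$ the number $k$ of jumps by time $t_n$ is exactly $n$. If $k\le n-1$, then the identity together with the constraint $X_{t_n}<b$ forces $\beta_{t_n}<x_0-z_0$, which is $-(b-x_0)/4$ for $z_0=x$ and $-3(b-x_0)/4$ for $z_0=y$; both violate $|\beta_{t_n}|<(b-x_0)/4$. Conversely, if $k\ge n+1$, then $T_n\le t_n$ and hence $|\beta_{T_n}|<(b-x_0)/4$; substituting this into the formula of Step~1 for $\mu T_n$ gives $\mu T_n>n(b-x_0)=\mu t_n$, a contradiction. Hence $k=n$, the identity collapses to $X_{t_n}=z_0+\beta_{t_n}$, and the lemma follows.

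The only delicate point is the jump-count control of the last step; the window $|\beta_s|<(b-x_0)/4$ coming from $J$ is exactly tight (the case $k\le n-1$ with $z_0=x$ saturates the bound on $\beta_{t_n}$, while the case $k\ge n+1$ with $z_0=y$ saturates the bound on $\beta_{T_n}$), which is what pins down the prefactors $1/4$, $1/2$, $3/4$ in the definitions of $J$, $A$, $x$, $y$ to $\mu t_n=n(b-x_0)$ and to the return period $b-x_0$.
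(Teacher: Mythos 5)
Your proof is correct and rests on the same pathwise mechanism as the paper's: on $\{\tau_J>t_n\}$ the path stays within $(b-x_0)/4$ of a deterministic reference trajectory of period $(b-x_0)/\mu$ that has returned to its starting point at time $t_n$ — the paper phrases this as a leg-by-leg induction bracketing $X$ between the paths $\xi_l,\xi_r$, while you package it as the telescoped identity $X_{t_n}=z_0+\sigma B_{t_n}+(n-k)(b-x_0)$ together with the determination $k=n$ of the jump count. The only step you use silently (in writing $\mu T_0+\beta_{T_0}=b-z_0$ and $\mu(T_i-T_{i-1})+\beta_{T_i}-\beta_{T_{i-1}}=b-x_0$) is that on $\{\tau_J>t_n\}$ every exit before time $t_n$ occurs at $b$ and never at $a$ — each leg stays above $x_0-(b-x_0)/2>a$ — which is precisely the paper's opening observation that $X_{T_1-}(\omega)=b$; it is worth one explicit line.
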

\begin{proof}
The proof proceeds via a pathwise argument. For the first assertion it suffices to show that 
\begin{displaymath}
\lbrace \tau_J > t_n \rbrace \subset  \lbrace X_{t_n} \in A,\tau_J > t_n \rbrace. 
\end{displaymath}
Let $(B_t(\omega))_{t \geq 0}$ be a realization of a Brownian path started in $0$ which exits the interval $J$ after time $t_n$, i.e. $\omega \in \lbrace \tau_J > t_n \rbrace$ and let $(X_t(\omega))_{t\geq 0}$ be a realization of our diffusion with jump boundary defined as in \eqref{e:represI}. 
First observe that due to the restriction on $B_t(\omega)$ one has $X_{T_1-}(\omega) = b$ and $T_0(\omega) < t_1$.

Let $n \in \mathbb{N}$ be given. In order to prove the first formula let us define two deterministic versions of our diffusion with jump boundary. The path $\xi_r$ is defined in the following way 
\begin{equation}\label{e:xi_rT_0}
\xi_r(t) :=
\begin{cases}
x_2 + \mu t &\text{ if $0 \leq t < T_0(\omega)$}\\
x_0 + [\xi_r(T_0(\omega)-)-b] + \mu(t-T_0(\omega)) &\text{ if $T_0(\omega) \leq t \leq t_1$}
\end{cases}
\end{equation}
In an analogous way we define 
\begin{equation}\label{e:xi_lT_0}
\xi_l(t) :=
\begin{cases}
x_0 + \mu t &\text{ if $0 \leq t < T_0(\omega)$}\\
x_0 + [\xi_l(T_0(\omega)-)-b] + \mu(t-T_0(\omega)) &\text{ if $T_0(\omega) \leq t \leq t_1$}
\end{cases}
\end{equation}
Observing that
\begin{equation}\label{e:jumptime}
\mu T_0(\omega)=b-x-\sigma B_{T_0}(\omega).
\end{equation}  
a simple calculation using \eqref{e:jumptime} shows that for $t_1 \geq t \geq T_0(\omega)$
\begin{equation}\label{e:distance}
X_t(\omega)-\xi_l(t) = \frac{b-x_0}{4} + \sigma B_t(\omega) =:\Delta^1_l(t) \text{  and  }\xi_r(t)-X_t(\omega) =  \frac{b-x_0}{4} - \sigma B_t(\omega) =:\Delta^1_r(t).
\end{equation}
Equation \eqref{e:distance} together with our restriction on the Brownian path $B_t(\omega)$ show that $T_1(\omega)>t_1$. At the time $t_1$ we have $\xi_l(t_1)= x_0$ and $\xi_r(t_1) = x_2$ and we have $\forall\, T_0(\omega) \leq t \leq t_1:\Delta_l^1(t),\Delta_r^1(t) \geq 0$. Thus $\forall\, T_0(\omega) \leq t \leq t_1:  \xi_{l}(t) \leq X_t(\omega) \leq \xi_r(t)$ and in particular $X_{t_1}(\omega) \in A$. Thus if $n = 1$ we are done. 

If $n \geq 2$, we first observe from \eqref{e:distance} that $T_1(\omega) < t_2$ and continue $\xi_l$ and $\xi_r$ for $T_0(\omega) \leq t \leq T_1(\omega)$ in the obvious way and for $T_1(\omega) \leq t \leq t_2$ by setting 
\begin{equation*}
\xi_r(t) = x_0 + (\xi_r(T_1(\omega)-)-b) + \mu(t-T_1(\omega))\, \text{   $T_1(\omega)\leq t \leq t_2$}. 
\end{equation*}
and
\begin{equation*}
\xi_l(t) =
 x_0 - (b-\xi_l(T_1(\omega)-)) + \mu(t-T_1) \,\text{   $T_1(\omega) \leq t \leq t_2$}. 
\end{equation*}
Using the same steps as in the case $n=1$ one calculates that for $T_1(\omega) \leq  t \leq t_2$
\begin{equation}\label{e:distanceII}
X_t(\omega)-\xi_l(t) = \frac{b-x_0}{4} + \sigma B_t(\omega) =:\Delta^2_l(t) \text{  and  } \xi_r(t)-X_t(\omega) =  \frac{b-x_0}{4} - \sigma B_t(\omega) =:\Delta^2_r(t)
\end{equation} 
showing that  $T_2(\omega) > t_2$ and that for $T_1(\omega) \leq t \leq t_2$ by the restriction on $B_t(\omega)$ we have $\xi_l(t)\leq X_t(\omega) \leq \xi_r(t)$. In particular $x_0=\xi_l(t_2) \leq X_{t_2}(\omega) \leq \xi_r(t_2)=x_2$. If $n =2$ we are done, if $n>2$ we can continue the process in the same manner. 

The second assertion follows via an essentially analogous argument and the proof is skipped.
\end{proof} 
Using the special choices made above and Lemma \ref{l:specialchoice} we conclude via the inverse triangle inequality that 
\begin{equation}\label{e:essentialsplitting}
\begin{split}
\bar{d}(t_n) &\geq \bigl|\mathbb{P}_x\bigl(X_{t_n} \in A)-\mathbb{P}_y\bigl(X_{t_n} \in A\bigr)\bigr| \\
&\geq \bigl|\mathbb{P}_x\bigl(X_{t_n} \in A,\tau_J>t_n)-\mathbb{P}_y\bigl(X_{t_n} \in A,\tau_J>t_n\bigr)\bigr| \\
&\, -\bigl|\mathbb{P}_x\bigl(X_{t_n} \in A,\tau_J \leq t_n)-\mathbb{P}_y\bigl(X_{t_n} \in A,\tau_J \leq t_n\bigr)\bigr|\\
&= \mathbb{P}\bigl(\tau_J > t_n\bigr)-\bigl|\mathbb{P}_x\bigl(X_{t_n} \in A,\tau_J \leq t_n)-\mathbb{P}_y\bigl(X_{t_n} \in A,\tau_J \leq t_n\bigr)\bigr| 
\end{split}
\end{equation}
The essential point is that the first term in the last line of \eqref{e:essentialsplitting} is the leading term. This will be shown in the last Lemma of this subsection.

Due to
\begin{equation*}
\begin{split}
\bigl|\mathbb{P}_x\bigl(X_{t_n} \in A,\tau_J \leq t_n)&-\mathbb{P}_y\bigl(X_{t_n} \in A,\tau_J \leq t_n\bigr)\bigr| \\
&\leq \int_0^{t_n}\bigl|\mathbb{P}_x\bigl(X_{t_n}\in A\mid\tau_J=s\bigr)-\mathbb{P}_y\bigl(X_{t_n}\in A\mid\tau_J=s\bigr)\bigr|\mathbb{P}(\tau_J \in ds)
\end{split}
\end{equation*} 
we arrive at 
\begin{equation}\label{e:lowerboundI}
\begin{split}
\bar{d}(t_n)&\geq \mathbb{P}(\tau_J > t_n) - \int_0^{t_n}\bigl|\mathbb{P}_x\bigl(X_{t_n}\in A\mid\tau_J=s\bigr)-\mathbb{P}_y\bigl(X_{t_n}\in A\mid\tau_J=s\bigr)\bigr|\mathbb{P}(\tau_J \in ds)
\end{split}
\end{equation}
In order to do the last step in the proof of Theorem \ref{t:main} we first note that for the process $(X_t)_{t \geq 0}$ with $X_0 = x$
\begin{equation*}
\begin{split}
X_{\tau_J} = 
\begin{cases}
\xi_l(\tau_J) & \text{ if $B_{\tau_J} = -\frac{b-x_0}{4\sigma}$ }\\
\xi_r(\tau_J) & \text{ if $B_{\tau_J} = \frac{b-x_0}{4\sigma}$ and $\xi_r(\tau_J) < b$} \\
\xi_r(\tau_J)-(b-a)/4 & \text{ if $B_{\tau_J} = \frac{b-x_0}{4\sigma}$ and $\xi_r(\tau_J) \geq b$}
\end{cases}
\end{split}
\end{equation*}
Now, let $(\tilde{X}_t)_{t \geq 0}$ be the $L^{\sigma,\mu}$-diffusion in $(a,b)$ with jump boundary and jump distribution $\delta_{x_0}$ and $\tilde{X}_0 = y$. Comparison with the in $x$ started process $(X_t)_{t \geq 0}$ demonstrates that the following property holds true:
\begin{equation}\label{e:startiny}
\text{\textbf{Either} we have  $\tilde{X}_{\tau_J} = X_{\tau_J}$  \textbf{or} we have $|\tilde{X}_{\tau_J} - X_{\tau_J}| = \frac{b-a}{2}$.}
\end{equation}
Moreover, each of the two possibilities in \eqref{e:startiny} occurs with probability $1/2$.
\begin{lemma}\label{l:smallconstant}
There exist a constant $\tilde{\mu}(\sigma,x_0)$, $n_0 \in \mathbb{N}$ and a constant $c < 1$ such that for $\mu \geq \tilde{\mu}(\sigma,x_0)$ and $n\geq n_0$
\begin{displaymath}
\int_0^{t_n}\bigl|\mathbb{P}_x\bigl(X_{t_n}\in A\mid\tau_J=s\bigr)-\mathbb{P}_y\bigl(X_{t_n}\in A\mid\tau_J=s\bigr)\bigr|\mathbb{P}(\tau_J \in ds) < c\,\mathbb{P}(\tau_J > t_n).
\end{displaymath}
\end{lemma}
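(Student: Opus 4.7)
The plan is to bound the integrand pointwise in $s$ via a strong Markov decomposition at $\tau_J$ and then feed the resulting exponential decay into Lemma~\ref{l:exponentialconvol}. Since $\tau_J$ is a stopping time of the driving Brownian motion $B$, and hence of the natural filtration carrying both $(X_t)$ and $(\tilde{X}_t)$, the strong Markov property will give, for each $s \in (0, t_n)$,
\[
\mathbb{P}_x(X_{t_n} \in A \mid \tau_J = s) = \mathbb{E}\bigl[\mathbb{P}_{X_{\tau_J}}(X_{t_n-s} \in A) \bigm| \tau_J = s\bigr],
\]
and the analogous identity for the process started at $y$. I would then invoke the dichotomy \eqref{e:startiny}: conditionally on $\tau_J = s$, with probability $1/2$ the two processes already coincide at time $\tau_J$ and thereafter stay coalesced (since they share the same driving Brownian motion and the same jump rule), so this event contributes $0$ to the difference; with the remaining probability $1/2$ we have $|X_{\tau_J} - \tilde{X}_{\tau_J}| = (b-a)/2$.

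On this non-coalescence event, the constraint that both positions lie in $(a, b)$ together with the fixed distance $(b-a)/2$ forces the pair to be of the form $(z, z + (b-a)/2)$ with $z \in (a, x_0)$, which is precisely the configuration treated in Lemma~\ref{l:fastcoupl}. That lemma will then give
\[
\bigl|\mathbb{P}_{X_{\tau_J}}(X_{t_n-s} \in A) - \mathbb{P}_{\tilde{X}_{\tau_J}}(X_{t_n-s} \in A)\bigr| \leq e^{\frac{b-a}{2}\frac{\mu}{\sigma^2}} e^{-\Lambda(\mu)(t_n - s)} = \sup_{z \in (a,x_0)}\mathbb{P}_z(\tau > t_n - s),
\]
with $\tau$ as in Lemma~\ref{l:fastcoupl}. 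Combined with the probability $1/2$ of the non-coalescence event, this produces the pointwise estimate
\[
\bigl|\mathbb{P}_x(X_{t_n} \in A \mid \tau_J = s) - \mathbb{P}_y(X_{t_n} \in A \mid \tau_J = s)\bigr| \leq \tfrac{1}{2}\sup_{z \in (a,x_0)}\mathbb{P}_z(\tau > t_n - s).
\]

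To conclude, I would integrate this estimate against $\mathbb{P}(\tau_J \in ds)$ and apply Lemma~\ref{l:exponentialconvol} with $(c, d) = J$. This will yield, for $\mu \geq \tilde{\mu}$ and $n$ large enough that $t_n \geq \tilde{t}$, a bound of the form $\tfrac{c^*}{2}\mathbb{P}(\tau_J > t_n)$ with some $c^* < 1$, and the choice $c := c^*/2 < 1$ then meets the requirement of the lemma.

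The main subtlety is really just bookkeeping: the factor $1/2$ coming from \eqref{e:startiny} is what allows the $\mu$-dependent prefactor $e^{(b-a)\mu/(2\sigma^2)}$ produced by Lemma~\ref{l:fastcoupl} to be absorbed, since Lemma~\ref{l:exponentialconvol} has been tailored precisely to swallow this pairing against $\mathbb{P}(\tau_J > t)$ once $\mu$ is large enough. Beyond this single observation the argument is a clean concatenation of the two preceding lemmata with the strong Markov property at $\tau_J$.
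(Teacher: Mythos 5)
Your argument is correct and follows essentially the same route as the paper: strong Markov property at $\tau_J$, the dichotomy \eqref{e:startiny} reducing the non-coalesced case to the shifted-pair configuration of Lemma~\ref{l:fastcoupl}, and then Lemma~\ref{l:exponentialconvol} to absorb the convolution. The only cosmetic difference is your factor $1/2$ from \eqref{e:startiny}, which the paper simply discards (bounding the coalesced contribution by $0$ and the rest by the supremum); it is not what absorbs the prefactor $e^{\frac{b-a}{2}\frac{\mu}{\sigma^2}}$ --- that is done entirely by Lemma~\ref{l:exponentialconvol}, since $\Lambda(\mu)$ grows quadratically in $\mu$ while the prefactor's exponent is only linear.
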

\begin{proof}
First choose $n_0 \in \mathbb{N}$ such that $t_n \geq \tilde{t}$ for every $n\geq n_0$. Now observe that due to the strong Markov property and \eqref{e:startiny} one has
\begin{equation}
\begin{split}
\bigl|\mathbb{P}_x\bigl(X_{t_n}\in A\mid\tau_J=s\bigr) &- \mathbb{P}_y\bigl(X_{t_n}\in A\mid\tau_J=s\bigr)\bigr| \\
&\leq \sup_{z \in (a,x_0)}\bigl|\mathbb{P}_z\bigl(X_{t_n-s} \in A\bigr)-\mathbb{P}_{z_S}\bigl(X_{t_n-s}\in A\bigr)\bigr|,
\end{split}
\end{equation}
where as in Lemma \ref{l:fastcoupl} $z_S=z+(b-a)/2$. Therefore we get via Lemma \ref{l:fastcoupl}
\begin{equation*}
\begin{split}
\int_0^{t_n}\bigl|\mathbb{P}_x\bigl(X_{t_n}\in A\mid\tau_J=s\bigr)&-\mathbb{P}_y\bigl(X_{t_n}\in A\mid\tau_J=s\bigr)\bigr|\mathbb{P}(\tau_J \in ds) \\
&\leq \int_0^{t_n}\sup_{z \in (x_0-(b-x_0)/4,x_0)}\bigl|\mathbb{P}_z\bigl(X_{t_n-s}\in A\bigr)-\mathbb{P}_{z_S}\bigl(X_{t_n-s}\in A\bigr)\bigr|\mathbb{P}(\tau_J \in ds) \\
&\leq \int_0^{t_n}\sup_{x \in (a,x_0)}\mathbb{P}_x(\tau > t-s)\mathbb{P}(\tau_J \in ds),
\end{split}
\end{equation*}
where as in Lemma \ref{l:fastcoupl} $\tau$ denotes the first exit time of $(\sigma B_t + \mu t)_{t \geq 0}$ from $(a,x_0)$. An application of Lemma \ref{l:exponentialconvol} immediately implies the assertion. 
\end{proof}
Lemma \ref{l:smallconstant} and inequality \eqref{e:lowerboundI} imply that for large enough $\mu$ and all $\mathbb{N} \ni n \geq n_0$ we have
\begin{equation}\label{e:lowerboundII}
\bar{d}(t_n) \geq (1-c)\mathbb{P}(\tau_J > t_n).
\end{equation}
Since the exponential tails of the exit time $\tau_J$ are easily calculated we have shown that 
\begin{equation}
\gamma_1(L^{\sigma,\mu},\delta_{x_0}) \leq \frac{8\sigma^2\pi^2}{(b-a)^2}
\end{equation}
for all $\mu\ge \mu_0$ and $\mu_0$ sufficiently large.
\subsection{Lower Bound on $\gamma_1(L^{\sigma,\mu},\delta_{x_0})$:} It will turn out once again that coupling methods are a powerful tool in getting bounds for the rate of convergence of a Markov process towards its invariant distribution. Similar to \cite{KW} we construct two suitable versions of diffusions with jump boundary and jump distribution $\delta_{x_0}$ corresponding to different initial distributions simultaneously and control the tails of the coupling time.\\
We will need the following very elementary auxiliary result:
\begin{proposition}\label{firstprop}
Let $I=(a,b)$ be an open interval with center $x_0=\frac{a+b}{2}$. For $y\in I$, let $\tau_{y}=\inf\{t:y+B_t\in\partial I\}$ the first time
of leaving the interval $I$. Then we have for all $y\in I$ and $t\in\mathbb{R}_+$
\[
\mathbb{P}\bigl(\tau_{y}>t\bigr)\le \mathbb{P}\bigl(\tau_{x_0}>t\bigr).
\] 
\end{proposition}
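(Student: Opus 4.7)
The plan is to reduce the desired inequality to a short symmetry-plus-concavity argument for $u(t,y) := \mathbb{P}(\tau_y > t)$ on $[0,\infty) \times I$. First I would observe that, by standard parabolic regularity, $u$ is a classical solution of the heat equation $\partial_t u = \frac{1}{2}\,\partial_{yy} u$ on $(0,\infty) \times I$, with Dirichlet boundary data $u(t,a) = u(t,b) = 0$ and constant initial data $u(0,\cdot) \equiv 1$ on $I$. Since $I$ is invariant under the reflection $y \mapsto a+b-y$ about $x_0$ and standard Brownian motion is symmetric under sign-reversal, one also obtains the reflection identity $u(t,y) = u(t, a+b-y)$ for all $t \geq 0$ and $y \in I$.

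Next, because $t \mapsto \mathbb{P}(\tau_y > t)$ is manifestly nonincreasing in $t$, one has $\partial_t u \leq 0$ in the interior, so the heat equation forces $\partial_{yy} u = 2\,\partial_t u \leq 0$. Hence $y \mapsto u(t,y)$ is concave on $I$ for each $t>0$. Combining concavity with the reflection identity then gives
$$u(t, x_0) \;=\; u\!\left(t, \frac{y + (a+b-y)}{2}\right) \;\geq\; \frac{1}{2}\bigl[u(t,y) + u(t,a+b-y)\bigr] \;=\; u(t, y),$$
which is exactly the claim; at $t=0$ both sides equal $1$ and the inequality is trivial.

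I do not foresee any real obstacle: the entire argument rests only on the obvious symmetry of the interval about its center, the monotonicity in $t$ of the survival probability, and the heat equation. The only technical point is that $u$ must be a classical solution in $(0,\infty) \times I$ so that the pointwise identity $\partial_{yy} u = 2\,\partial_t u$ may be used, and this is immediate from smoothness of the Dirichlet heat kernel on $(a,b)$. An equally elementary alternative route would be to expand $u(t,\cdot)$ in the Dirichlet sine basis on $(a,b)$, but since only odd Fourier modes contribute and they alternate in sign at $y=x_0$, the PDE-based concavity argument above is cleaner.
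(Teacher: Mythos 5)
Your argument is correct, but it is genuinely different from the one in the paper. The paper proves the proposition by a mirror coupling: it runs $X_t=y-B_t$ and $Y_t=x_0+B_t$ until they either meet at $\frac{x_0+y}{2}$ (after which they are glued and exit together) or one of them exits first, in which case it is necessarily the process started at $y$; this yields the stochastic domination directly, in keeping with the probabilistic flavour of the whole paper. Your route instead observes that $u(t,y)=\mathbb{P}(\tau_y>t)$ is a classical solution of $\partial_t u=\tfrac12\partial_{yy}u$ with Dirichlet data, that monotonicity of the survival probability in $t$ forces $\partial_{yy}u=2\partial_t u\le 0$, and that concavity together with the reflection symmetry $u(t,y)=u(t,a+b-y)$ pins the maximum at the midpoint. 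All steps check out: smoothness of $u$ on $(0,\infty)\times I$ is standard, a smooth nonincreasing function has nonpositive time derivative, and the midpoint inequality $u(t,x_0)\ge\tfrac12\bigl[u(t,y)+u(t,a+b-y)\bigr]=u(t,y)$ is exactly the claim. Your argument is shorter and in fact gives slightly more than is asked: a symmetric concave profile is nonincreasing in the distance to the center, so $u(t,y)\ge u(t,z)$ whenever $|y-x_0|\le|z-x_0|$. What the coupling buys in exchange is that it stays entirely within the elementary pathwise toolbox used in the rest of the proof of Theorem \ref{t:main} and requires no appeal to parabolic regularity; but as a self-contained proof of the proposition, your version is perfectly sound.
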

\begin{proof}
The proof is based on a simple coupling argument: Without loss of generality we may assume that $b>y>x_0$. We define the coupling as follows:
For $t<\tau^{x_0y}=\inf\{t:X_t=Y_t\}$ let
$X_t=y-B_t$ and $Y_t=x_0+B_t$. Now let us distinguish the following two cases:
First case: $X$ and $Y$ meet in $\frac{x_0+y}{2}$ at time $\tau^{x_0y}<\tau_{x_0}$. In this case let
$Y=X=\frac{x_0+y}{2}+B_t -B_{\tau_{x_0}}$ for $t\ge \tau^{x_0y}$ and hence both processes leave the interval at the same time.\\
Second case: $X$ and $Y$ do not meet each other before $\tau_{x_0}$. In this case, we have by definition of the processes that
$\tau_y <\tau_{x_0}$. Hence we obtain
\begin{eqnarray*}
\mathbb{P}\bigl(\tau_{x_0}>t\bigr)&=&\mathbb{P}\bigl(\tau_{x_0}>t, \tau_{x_0}>\tau^{x_0y}\bigr)+\mathbb{P}\bigl(\tau_{x_0}>t,\tau_{x_0}<\tau^{x_0y}\bigr)\nonumber\\
&=&\mathbb{P}\bigl(\tau_y>t, \tau_{x_0}>\tau^{x_0y}\bigr)+\mathbb{P}\bigl(\tau_{x_0}>t,\tau_{x_0}<\tau^{x_0y}\bigr)\nonumber\\
&\ge&\mathbb{P}\bigl(\tau_y>t, \tau_{x_0}>\tau^{x_0y}\bigr)+\mathbb{P}\bigl(\tau_y>t,\tau_{x_0}<\tau^{x_0y}\bigr)\nonumber\\
&=&\mathbb{P}\bigl(\tau_y>t\bigr).
\end{eqnarray*}
 \end{proof}
\subsubsection{Construction of the Coupling}
Without loss of generality we may assume that $a<x<y<b$ and $\mu > 0$. 





\begin{enumerate}
\item[I)]
We let both processes run in opposite directions, i.e we define $X_t=x+\mu t+\sigma B_t$ and $Y_t=y+\mu t-\sigma B_t$ up to the time $\tau^I=\tau_1\wedge\tau_2\wedge \tau_3$, where 
\begin{equation*}
\tau_1=\inf\{t>0\mid x+\mu t +\sigma B_t=a\rbrace, \,\tau_2=\inf \lbrace t> 0\mid y+\mu t -\sigma B_t=b\rbrace 
\end{equation*}
and 
\begin{equation*}
\tau_3 = \inf \lbrace t > 0 \mid x + \mu t + \sigma B_t = y +\mu t- \sigma B_t\rbrace.
\end{equation*}
If $\tau^{I} = \tau_3$, we are done. Otherwise, move to the next stage.
\item[II)] We set 
\begin{equation*}
\begin{split}
X_{\tau^I}=
\begin{cases}
x_0 &\text{ if $\tau^I = \tau_1$} \\
x + \mu \tau^I +\sigma B_{\tau^I} &\text{ if $\tau^I=\tau_2$}
\end{cases}
\end{split}
\end{equation*}
and 
\begin{equation*}
\begin{split}
Y_{\tau^I}=
\begin{cases}
y + \mu \tau^I - \sigma B_{\tau^I} &\text{ if $\tau^I = \tau_1$} \\
x_0 &\text{ if $\tau^I=\tau_2$}.
\end{cases}
\end{split}
\end{equation*}
By definition of $\tau^{I}$, we have that either $X_{\tau^{I}}= x_0 =\frac{a+b}{2}$ or $Y_{\tau^{I}}= x_0=\frac{a+b}{2}$ and hence $|X_{\tau^{I}}-Y_{\tau^{I}}| \le \frac{b-a}{2}$. For $t > \tau^{I}$ we define $(X_t)_{t}$ analogous to the way we did it in the representation \eqref{e:represI}. The process $(Y_t)_{t}$ is continued also analogous to the representation \eqref{e:represI} but with the difference that for $(Y_t)_{t\ge 0}$ we use $(-B_t)_{t\ge 0}$ instead of $(B_t)_{t\ge 0}$. For a more formal description we refer to the proof of Lemma \ref{l:domination}.

We wait until time $\tau^{II}$
\begin{equation*}
\begin{split}
\tau^{II}= \inf \bigl \lbrace t \geq \tau^I \mid |X_{t}-Y_{t}| \in \lbrace 0, (b-a)/2\rbrace\bigr\rbrace.
\end{split}
\end{equation*}
If $X_{\tau^{II}}=Y_{\tau^{II}}$, we are done. Otherwise, move to the next stage.
\item[III)]
Now if $X_{\tau^{II}}\not=Y_{\tau^{II}}$, it follows by definition that  
\begin{equation}\label{essential_part}
|X_{\tau^{II}}-Y_{\tau^{II}}|=\frac{b-a}{2},
\end{equation}
For $t\ge \tau^{II}$, we once again change the sign of the Brownian
motion in the definition of the process $Y$, i.e. we define for $t \geq \tau^{II}$
\begin{equation*}
X_t=X_{\tau^{II}}+\mu(t-\tau^{II})+\sigma (B_t-B_{\tau^{II}}) \text{  and  } Y_t=Y_{\tau^{II}}+\mu(t-\tau^{II})+\sigma (B_t-B_{\tau^{II}})
\end{equation*}
and wait until
\begin{displaymath}
\tau_{coup}=\tau^{III} = \inf \lbrace t \geq \tau^{II}\mid X_t = x_0 \rbrace.
\end{displaymath}
\item[IV)]
For $t\ge \tau_{coup}$, we obviously can choose $X_t=Y_t=\frac{a+b}{2}+\mu(B_t-B_{\tau_{coup}})+\sigma(B_t-B_{\tau_{coup}})$, which means that the processes have been coupled after $\tau_{coup}$.
\end{enumerate}

We note, that by construction the process $(X_t)_{t \geq 0}$ [$(Y_t)_{t\geq 0}$] is a $L^{\sigma,\mu}$-diffusion in $(a,b)$ with jump boundary and jump distribution $\delta_{x_0}$ starting in $x$ [$y$].

\subsubsection{Analysis of the Coupling.}
To finalize the proof, we have to show that
\begin{enumerate}
\item[$\bullet$]
$\tau_{coup}$ is a coupling time and $\lim_{t\rightarrow\infty}-\frac{1}{t}\log\mathbb{P}\bigl(\tau_{coupl}>t\bigr)\ge \frac{8\sigma^2\pi^2}{(b-a)^2}$.
\end{enumerate}
In order to prove this statement we first look at the time $\tau^{II}$ in somewhat more detail.

\begin{lemma}\label{l:domination}
Let $(X_t)_{t \geq 0}$ and $(Y_t)_{t \geq 0}$ be constructed as above in $I)$ and $II)$. and $\tau^{II}$ as defined above.
Then we have
\begin{equation}
\mathbb{P}\bigl(\tau^{II}-\tau^{I}>t)\le\mathbb{P}\bigl(B_s\not\in\{-\frac{b-a}{8\sigma},\frac{b-a}{8\sigma}\},\,\forall s\le t\bigr).
\end{equation}
\end{lemma}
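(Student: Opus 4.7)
The plan is to reduce the computation of $\tau^{II}-\tau^{I}$ to a pure exit-time problem for standard Brownian motion, via a modular observation about the difference process $D_t:=X_t-Y_t$. Set $M:=(b-a)/2$ and $M':=M/(2\sigma)=(b-a)/(4\sigma)$. I first analyze the driving equation and the jump sizes of $D$: since in Stage II one has $X_t-X_{\tau^I}=\mu(t-\tau^{I})+\sigma(B_t-B_{\tau^{I}})$ while $Y$ uses $-B$, between boundary hits
$$D_t=D_{\tau^{I}}+2\sigma(B_t-B_{\tau^{I}}).$$
A quick case check ($X$ or $Y$ hits $a$ or $b$, then gets reset to $x_0=(a+b)/2$) shows that every jump of $D$ is of size exactly $\pm M$. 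Moreover, by the definition of $\tau^{I}$ one has $|D_{\tau^{I}}|<M$, and $D_t\in(-2M,2M)$ throughout.

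Since all jumps are integer multiples of $M$, the residue $D_t\bmod M$ is continuous in $t$, and between jumps equals $(D_{\tau^{I}}+2\sigma\tilde B_s)\bmod M$, where $\tilde B_s:=B_{\tau^{I}+s}-B_{\tau^{I}}$ is a standard Brownian motion independent of $\mathcal{F}_{\tau^{I}}$. Because $D_t\in(-2M,2M)$, we have $D_t\in\{0,\pm M\}$ iff $D_t\equiv 0\pmod M$. Hence, conditionally on $\mathcal{F}_{\tau^{I}}$,
$$\tau^{II}-\tau^{I}=\inf\bigl\{s\ge 0:\tilde B_s\equiv\alpha\pmod{M'}\bigr\},\qquad\alpha:=-\tfrac{D_{\tau^{I}}}{2\sigma}\bmod M'\in[0,M').$$
(The case $\alpha=0$ corresponds to $D_{\tau^{I}}=0$, which does not occur in Stage II.) By continuity of $\tilde B$ starting at $0\in(\alpha-M',\alpha)$, and since $\alpha-M'$ and $\alpha$ are the two nearest points of the lattice $\alpha+M'\mathbb{Z}$ to $0$, this infimum equals the first exit time of $\tilde B$ from the open interval $(\alpha-M',\alpha)$ of length $M'$.

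To conclude, the translation $s\mapsto \tilde B_s-(\alpha-M'/2)$ identifies this exit time with the exit time of a standard Brownian motion from the symmetric interval $J':=(-M'/2,M'/2)=\bigl(-(b-a)/(8\sigma),(b-a)/(8\sigma)\bigr)$ started at the point $M'/2-\alpha\in J'$. Proposition~\ref{firstprop}, applied with $I=J'$ (whose center is $0$), then gives
$$\mathbb{P}\bigl(\tau^{II}-\tau^{I}>t\mid\mathcal{F}_{\tau^{I}}\bigr)\le\mathbb{P}\bigl(B_s\in J',\,\forall s\le t\bigr),$$
and taking expectations yields the lemma. The main creative step is the modular reduction: once one notices that every jump of $D$ is exactly $\pm M$, so that the target set $\{0,\pm M\}$ collapses to a single equivalence class modulo $M$, the remaining translation/symmetry argument is a direct application of Proposition~\ref{firstprop}. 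The only technical care is in the jump case check and in noting that the symmetry lemma gives the bound uniformly in $\alpha$, hence uniformly in $D_{\tau^{I}}$.
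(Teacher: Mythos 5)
Your proof is correct and arrives at the same pivotal identification as the paper --- namely that, conditionally on $\mathcal{F}_{\tau^I}$, the time $\tau^{II}-\tau^{I}$ is the exit time of a standard Brownian motion from an interval of length $(b-a)/(4\sigma)$ containing the origin --- followed by the same appeal to Proposition~\ref{firstprop} to pass to the centred interval $\bigl(-\tfrac{b-a}{8\sigma},\tfrac{b-a}{8\sigma}\bigr)$. The route to that identification is, however, genuinely different in organization. The paper introduces the whole sequence of jump times $\tau^{I,2},\tau^{I,3},\dots$ and checks by an explicit induction that on each interval $[\tau^{I,k},\tau^{I,k+1})$ the difference $|Y_t-X_t|$ is the same affine function of $B_t-B_{\tau^I}$; it also relies on the (only briefly justified) claim that during Stage II jumps occur only through the endpoint $b$. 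Your modular argument compresses all of this into the single observation that every jump of $D_t=X_t-Y_t$ equals exactly $\pm\tfrac{b-a}{2}$ --- true whichever endpoint is hit, precisely because the redistribution point $x_0$ is the midpoint --- so that $D_t \bmod \tfrac{b-a}{2}$ is the continuous process $\bigl(D_{\tau^I}+2\sigma\tilde B_s\bigr)\bmod\tfrac{b-a}{2}$ and $\tau^{II}$ is its first visit to $0$. This is shorter and slightly more robust, since it is indifferent to which boundary point produces the jumps. Two cosmetic points: the unqualified formula $D_t=D_{\tau^{I}}+2\sigma(B_t-B_{\tau^{I}})$ is valid only up to the accumulated jumps (which is exactly why you pass to the residue, so nothing is lost); and $\alpha=0$ also covers $D_{\tau^I}=\pm\tfrac{b-a}{2}$, not only $D_{\tau^I}=0$, but then $\tau^{II}=\tau^{I}$ and the claimed inequality is trivial.
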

\begin{proof}
Observe that $\forall\,t\in [\tau^I,\tau^{II}]$ we have 
$X_{t-}\neq a,\, Y_{t-}\neq a$,
i.e. for $\tau^I < t\le\tau^{II}$ jumps occur only via the boundary point $b$. Without loss of generality we may assume that $X_{\tau^{I}}=x_0$ and hence $Y_{\tau^{I}}>x_0$.
For $\tau^{I}\le t\le\tau^{II}$ let
\[
Y_t=\left\{\begin{array}{cc}
Y_{\tau^{I}}+\mu(t-\tau^{I})-\sigma(B_t-B_{\tau^{I}})&:\,\,\tau^{I}\le t<\tau^{I,2}\\
\frac{a+b}{2}+\mu(t-\tau^{I,2})-\sigma(B_t-B_{\tau^{I,2}})&:\,\,\tau^{I,2}\le t<\tau^{I,4}\\
\frac{a+b}{2}+\mu(t-\tau^{I,4})-\sigma(B_t-B_{\tau^{I,4}})&:\,\,\tau^{I,4}\le t<\tau^{I,6}\\
\cdots&\cdots\\
\frac{a+b}{2}+\mu(t-\tau^{I,n-2})-\sigma(B_t-B_{\tau^{I,n-2}})&:\,\,\tau^{I,n-2}\le t<\tau^{I,n}\\
\cdots &\cdots
\end{array}
\right.
\]
and similarly
\[
X_t=\left\{\begin{array}{cc}
\frac{a+b}{2}+\mu(t-\tau^{I})+\sigma(B_t-B_{\tau^{I}})&:\,\,\tau^{I}\le t<\tau^{I,3}\\
\frac{a+b}{2}+\mu(t-\tau^{I,3})+\sigma(B_t-B_{\tau^{I,3}})&:\,\,\tau^{I,3}\le t<\tau^{I,5}\\
\frac{a+b}{2}+\mu(t-\tau^{I,5})+\sigma(B_t-B_{\tau^{I,5}})&:\,\,\tau^{I,5}\le t<\tau^{I,7}\\
\cdots&\cdots\\
\frac{a+b}{2}+\mu(t-\tau^{I,n-1})+\sigma(B_t-B_{\tau^{I,n-1}})&:\,\,\tau^{I,n-1}\le t<\tau^{I,n+1}\\
\cdots &\cdots
\end{array}
\right.,
\]

where
\[
\tau^{I,2}=\inf\{t>\tau^{I}: Y_{\tau^{I}}+\mu (t-\tau^{I})-\sigma(B_t-B_{\tau^{I}})=b\}, 
\]

\[
\tau^{I,3}=\inf\{t>\tau^{I,2}: X_{\tau^{I}}+\mu (t-\tau^{I,2})+\sigma(B_t-B_{\tau^{I,2}})=b\}
\]
and inductively for all $k\in\mathbb{N}$ (note that after a jump, $Y_t-X_t$ changes its sign) 
\[
\tau^{I,2k}=\inf\{t>\tau^{I,2k-1}: Y_{\tau^{I,2k-2}}+\mu (t-\tau^{I,2k-2})-\sigma(B_t-B_{\tau^{I,2k-2}})=b\}
\]
and
\[
\tau^{I,2k+1}=\inf\{t>\tau^{I,2k}: X_{\tau^{I,2k}}+\mu (t-\tau^{I,2k})+\sigma(B_t-B_{\tau^{I,2k}})=b\}.
\]

We are interested in the differences 
$|Y_t-X_t|$ for $t\in[\tau^{I},\tau^{II})$.
For $t\in[\tau^{I},\tau^{I,2})$, we have that $\{t<\tau^{II}\}$ is equivalent to  
\[
|Y_t-X_t|=Y_t-X_t=Y_{\tau^{I}}-\frac{a+b}{2}-2\sigma (B_t-B_{\tau^{I}})\not\in\{0,\frac{b-a}{2}\},
\]
which again is equivalent to
\begin{equation}\label{the_distance}
B_t-B_{\tau^{I}}\in\left(-\frac{b-Y_{\tau^{I}}}{2\sigma},\frac{Y_{\tau^{I}}-\frac{a+b}{2}}{2\sigma}\right).
\end{equation}
Due to the definition of $X$ and $Y$ we have at $\tau^{I,2}-$ that 
\[
Y_{\tau^{II}-}-X_{\tau^{II}-}=   Y_{\tau^{I}}-\frac{a+b}{2}-2(\sigma B_{\tau^{I,2}}-B_{\tau^{I}})
\]
(note that the drift cancels out and the BM is continuous). At time $\tau^{I,2}$, the $Y$-process jumps to $\frac{a+b}{2}$ (the process jumps $\frac{b-a}{2}$ to the left), and hence we have
\[
|Y_{\tau^{I,2}}-X_{\tau^{I,2}}|=X_{\tau^{I,2}}-Y_{\tau^{I,2}}=\frac{b-a}{2}-\bigl(Y_{\tau^{I}}-\frac{a+b}{2}-2\sigma (B_{\tau^{I,2}}-B_{\tau^{I}})\bigr)=b-Y_{\tau^{I}}+2\sigma (B_{\tau^{I,2}}-B_{\tau^{I}}).
\]
For $t\in[\tau^{I,2},\tau^{I,3})$ we obtain by the definitions of $X_t$ and $Y_t$ that
\begin{eqnarray}
|X_t-Y_t|&=&(X_t-X_{\tau^{I,2}})-(Y_t-Y_{\tau^{I,2}})+(X_{\tau^{I,2}}-Y_{\tau^{I,2}})\nonumber\\
&=&2\sigma (B_t-B_{\tau^{I,2}})+b-Y_{\tau^{I}}+2\sigma (B_{\tau^{I,2}}-B_{\tau^{I}})\nonumber\\
&=&b-Y_{\tau^{I}}+2\sigma (B_t-B_{\tau^{I}}).
\end{eqnarray}
Hence, once again we see that
for $t\in[\tau^{I,2},\tau^{I,3})$, we have that $\{t<\tau^{II}\}$ is equivalent to \eqref{the_distance}.
For $t\in[\tau^{I,3},\tau^{I,4})$, these calculations repeat (having in mind that the sign of $Y_t-X_t$ changes after every jump), so we
end up with 
\begin{equation}
\mathbb{P}\bigl(\tau^{II}-\tau^{I}>t\bigr)=\mathbb{P}\biggl(B_s-B_{\tau^{I}}\in(-\frac{b-Y_{\tau^{I}}}{2\sigma},\frac{Y_{\tau^{I}}-\frac{a+b}{2}}{2\sigma})\,\,\forall s\in (\tau^{I},\tau^{I}+t)\biggr).
\end{equation} 
But then the claim follows from Proposition \ref{firstprop}.

\end{proof}

In order to finish the proof of Theorem \ref{t:main}, we write $\tau_{coup}=\tau^{I}+(\tau^{II}-\tau^{I})+(\tau_{coup}-\tau^{II})$.
We have
\begin{eqnarray}
\mathbb{P}\bigl(\tau_{coup}>t\bigr)&\le&\frac{\mathbb{E}\bigl[e^{\lambda\tau_{coup}}\bigr]}{e^{\lambda t}}\nonumber\\
&=&\frac{\mathbb{E}\bigl[e^{\lambda\tau^{I}}\mathbb{E}\bigl[e^{\lambda(\tau^{II}-\tau^{I})}|\mathcal{F}_{\tau^{I}}\bigr]\mathbb{E}\bigl[e^{\lambda(\tau_{coup}-\tau^{II})} |\mathcal{F}_{\tau^{II}}\bigr]\bigr]}{e^{\lambda t}},
\end{eqnarray}
where $\mathcal{F}_{\tau^{I}}$ denotes the $\sigma$-field generated by $B_s$ for $s\le\tau^{II}$.\\ 
First of all, note that by definition of $\tau^{I}$ we have that
\begin{equation}
\mathbb{P}\bigl(\tau^{I}>b-a\bigr)=0
\end{equation}
and therefore
\begin{eqnarray}\label{to_show}
\mathbb{P}\bigl(\tau_{coup}>t\bigr)\le C_{\lambda}^{(1)}\cdot\frac{\mathbb{E}\bigl[\mathbb{E}\bigl[e^{\lambda(\tau^{II}-\tau^{I})}|\mathcal{F}_{\tau^{I}}\bigr)\mathbb{E}\bigl[e^{\lambda(\tau_{coup}-\tau^{II})} |\mathcal{F}_{\tau^{II}}\bigr]\bigr]}{e^{\lambda t}}.
\end{eqnarray}
Let us determine upper bounds for $\mathbb{E}\bigl[e^{\lambda(\tau^{II}-\tau^{I})}|\mathcal{F}_{\tau^{I}}\bigr]$ and $\mathbb{E}\bigl[e^{\lambda(\tau_{coup}-\tau^{II})} |\mathcal{F}_{\tau^{II}}\bigr]$. 
By Lemma \ref{l:domination} we have 
\begin{eqnarray}
\mathbb{P}\bigl(\tau^{II}-\tau^{I}>t|\mathcal{F}_{\tau^{I}}\bigr) &\le&\mathbb{P}\biggl( B_s\not\in\{-\frac{b-a}{8\sigma},\frac{b-a}{8\sigma}\} \mbox{ for all }s\le t\biggr)\nonumber\\
&\le&C e^{-\frac{8\sigma^2\pi^2}{(b-a)^2}t}.
\end{eqnarray}
Hence partial integration yields
\begin{eqnarray}\label{before}
\mathbb{E}\bigl[e^{\lambda(\tau^{II}-\tau^{I})}|\mathcal{F}_{\tau^{I}}\bigr]&=&\int_{0}^{\infty}e^{\lambda s}\mathbb{P}\bigl(\tau^{II}-\tau^{I}\in ds|\mathcal{F}_{\tau^{I}}\bigr)\nonumber\\
&=&-e^{\lambda s}\mathbb{P}\bigl(\tau^{II}-\tau^{I}>s\bigr)\bigr|_{0}^{\infty}+\lambda\int_{0}^{\infty}e^{\lambda s}\mathbb{P}\bigl(\tau^{II}-\tau^{I}>s\bigr)ds\nonumber\\
&\le& C^{(2)}_{\lambda}<\infty\,\,\,\forall \lambda<\frac{8\sigma^2\pi^2}{(b-a)^2}.
\end{eqnarray}
In order to establish an upper bound for $\mathbb{E}\bigl[e^{\lambda(\tau_{coup}-\tau^{II})} |\mathcal{F}_{\tau^{II}}\bigr]$, first observe that after time $\tau^{II}$, both 
processes have distance $\frac{b-a}{2}$ (or have already been successfully coupled). Since the processes run parallel after $\tau^{II}$ in the 
interval $(a,b)$ and have distance $\frac{a-b}{2}$, one of the processes necessarily hits the boundary of the interval and then both processes immediately glue together. The boundary is hit by one of the processes exactly if a Brownian motion with variance $\sigma$ and drift $\mu$ exits an interval with length $\frac{b-a}{2}$. Hence, it follows from Lemma \ref{l:exittime} that for a suitable constant $K > 0$ we have
\begin{equation}
\mathbb{P}\bigl(\tau_{coup}-\tau^{II}>t |\mathcal{F}_{\tau^{II}}\bigr)\leq K\,e^{-\bigl(\frac{\pi^2\sigma^2}{2((b-a)/2)^2}+\frac{\mu^2}{2\sigma^2}\bigr)t}
\end{equation}
and hence by partial integration as in \eqref{before} 
\begin{equation}\label{second_eq}
\mathbb{E}\bigl[e^{\lambda(\tau_{coup}-\tau^{II})} |\mathcal{F}_{\tau^{II}}\bigr]<C^{(3)}_{\lambda}<\infty\,\,\,\forall \lambda < \frac{2\pi^2\sigma^2}{(b-a)^2}+
\frac{\mu^2}{2\sigma^2}.
\end{equation}
Now plugging \eqref{before} and \eqref{second_eq} into \eqref{to_show}, we finally end up with
\begin{equation}\label{e:tailofcoupl}
\mathbb{P}\bigl(\tau_{coup}>t\bigr)<C^{(4)}_{\lambda}e^{-\lambda t},\,\,C^{(4)}_{\lambda}<\infty\,\,\,\,\forall \lambda<\min\bigl(\frac{2\sigma^2\pi^2}{(b-a)^2}+\frac{\mu^2}{2\sigma^2},\frac{8\sigma^2\pi^2}{(b-a)^2}\bigr).
\end{equation}
This finalizes the proof.

We expect that the coupling presented above is efficient for all values of $\mu$. Equation \eqref{e:tailofcoupl} thus leads to the conjecture 
\begin{displaymath}
\mu_0(\sigma,x_0) = \sqrt{3}\frac{2\sigma^2\pi}{b-a},
\end{displaymath}
which we already have formulated in equation \eqref{e:threshold}.
\section*{Acknowledgement}
The authors would like to thank Ross Pinsky for helpful remarks concerning the topic of this work.

\end{document}